\RequirePackage{ifthen}

\ifthenelse{\isundefined{\philtrans}}{
\documentclass[reqno,onefignum,final]{siamart171218}

\usepackage{graphicx}
\graphicspath{{figs/}}
\usepackage{amsmath,amssymb}
\usepackage{bm}
\usepackage[caption=false]{subfig}

\addtolength{\oddsidemargin}{.625in}
\addtolength{\evensidemargin}{.625in}

\bibliographystyle{siamplain}


\usepackage{lipsum}
\usepackage{amsfonts}
\usepackage{graphicx}
\usepackage{epstopdf}
\usepackage{algorithmic}
\ifpdf
  \DeclareGraphicsExtensions{.eps,.pdf,.png,.jpg}
\else
  \DeclareGraphicsExtensions{.eps}
\fi

\newcommand{\TheTitle}{Winding of a Brownian particle\\ around a point vortex}
\newcommand{\TheAuthors}{Huanyu Wen and Jean-Luc Thiffeault}

\headers{Winding of a Brownian particle around a point vortex}{\TheAuthors}

\title{{\TheTitle}}

\author{%
  Huanyu Wen%
  \thanks{Department of Mathematics, University of Wisconsin -- Madison,
    Madison, WI 53706, USA
  }
  \and
  Jean-Luc Thiffeault%
  \footnotemark[1]{}
  \thanks{\email{jeanluc@math.wisc.edu}}
}


\ifpdf
\hypersetup{
  pdftitle={\TheTitle},
  pdfauthor={\TheAuthors}
}
\fi




\newsiamremark{remark}{Remark}


}{

\documentclass{rstransa_jlt}

\usepackage{graphicx}
\graphicspath{{figs/}}
\usepackage{amsmath,amssymb}
\usepackage{bm}
\usepackage[caption=false]{subfig}
\usepackage[capitalise]{cleveref}

\bibliographystyle{rsta}

\title{Winding of a Brownian particle around a point vortex}

\author{%
  Huanyu Wen
  and
  Jean-Luc Thiffeault
}

\address{%
  Department of Mathematics \\
  University of Wisconsin -- Madison \\
  Madison, WI 53706, USA
}

\corres{Jean-Luc Thiffeault\\
\email{jeanluc@math.wisc.edu}}

\subject{%
  60J65, 
  76B47, 
  60H10  
}

\keywords{%
  Planar Brownian motion, winding angle distribution, point vortices
}

\newtheorem{proposition}{\bf Proposition}[section]

}

%
%

\newcommand{\mathnotation}[2]{\newcommand{#1}{\ensuremath{#2}}}

\newcommand{\Order}[1]{\ensuremath{\mathcal{O}\!\l(#1\r)}}
\newcommand{\nofrac}[2]{#1/#2}

\renewcommand{\l}{\left}                
\renewcommand{\r}{\right}               
\mathnotation{\ee}{{\mathrm e}}         
\mathnotation{\imi}{\mathrm{i}}         
\mathnotation{\ldef}{\mathrel{\raisebox{.069ex}{:}\!\!=}}
\mathnotation{\dint}{\mathop{}\!\mathrm{d}}
\mathnotation{\dif}{\mathop{}\!\mathrm{d}}
\mathnotation{\E}{\mathbb{E}}           
\mathnotation{\RN}{\mathbb{R}}          
\mathnotation{\CN}{\mathbb{C}}          
\mathnotation{\p}{p}                    
\renewcommand{\P}{P}                    
\mathnotation{\W}{W}                    
\mathnotation{\X}{X}                    
\mathnotation{\x}{x}                    
\mathnotation{\R}{\rho}                 
\mathnotation{\kd}{k}                   
\mathnotation{\I}{\mathrm{I}}           
\mathnotation{\at}{{\tilde a}}         
\mathnotation{\Ltwo}{\mathrm{L}^2}      

\mathnotation{\convdist}{\xrightarrow{\enskip d\enskip}}

\mathnotation{\Rv}{\bm{R}}
\mathnotation{\rv}{\bm{r}}

\newcommand{\D}[2]{\frac{\partial #1}{\partial #2}}
\DeclareMathOperator{\sech}{sech}
\DeclareMathOperator{\sign}{sgn}
\DeclareMathOperator{\Gammadist}{Gamma}

\begin{document}

\ifthenelse{\isundefined{\philtrans}}
{

\maketitle

}
{
}

\begin{abstract}
  We derive the asymptotic winding law for a Brownian particle in the plane
  subjected to a tangential drift due to a point vortex.  For winding around a
  point, the normalized winding angle converges to an inverse Gamma
  distribution.  For winding around a disk, the angle converges to a
  distribution given by an elliptic theta function.  For winding in an
  annulus, the winding angle is asymptotically Gaussian with a linear drift
  term.  We validate our results with numerical simulations.
\end{abstract}



\ifthenelse{\isundefined{\philtrans}}{}
{

\begin{fmtext}

}

\section{Introduction}

Let~$\Rv(t)$ be a two-dimensional Brownian motion in $\RN^2$ with unit
diffusivity, so that $\E|\Rv(t)|^2 = 4t$.  Take $\Theta(t)$ to be the total
winding angle with respect to the origin accumulated by $\Rv(t)$ up to time
$t$.  It is a known fact that Brownian motion avoids the origin with
probability~$1$, so the winding angle is well-defined. Since the winding angle
takes value in $(-\infty,\infty)$ instead of $[0,2\pi)$, we can view the
Brownian motion as taking place in the universal cover of
$\CN\backslash\{0\}$, i.e., the Riemann surface of $\log z$.

Spitzer~\cite{Spitzer1958} in 1958 showed that the normalized winding angle
converges to a \textit{standard Cauchy} distribution as
$t\rightarrow \infty$,
\begin{equation}
  \frac{2\Theta(t)}{\log t}
  \convdist
  \X,
  \qquad
  \p_\X(\x) = \frac{1}{\pi} \frac{1}{1+\x^2}\,,
  \label{eq:Spitzer}
\end{equation}
where~$\p_\X(\x)$ denotes the probability density of~$\X$.  Spitzer proved
this by solving the Kolmogorov backward equation in a wedge region, then
taking the limit of infinite wedge angle.  A key feature of Spitzer's law is
that the winding angle has infinite variance, which is due to the roughness of
the Brownian trajectory generating large winding near the
origin~\cite{Rudnick1987}.  In fact, all positive integer moments diverge.
This divergence is undesirable when modeling physical problems, such as
flexible polymers, and there are several ways to regularize it.

\ifthenelse{\isundefined{\philtrans}}{}{

\end{fmtext}

\maketitle

}

Instead of a continuous process such as Brownian motion, let $\Rv(n)$ be a
random walk on a square lattice of $\RN^2$ that avoids the origin, with
i.i.d.\ steps~$\Delta\Rv$.  B\'{e}lisle~\cite{Belisle1989} showed that if
$\Delta\Rv$ is bounded or absolutely continuous with respect to Lebesgue
measure, then the winding angle follows a \emph{standard hyperbolic secant}
distribution,
\begin{equation}
  \frac{2\Theta(n)}{\log n}
  \convdist
  \X,
  \qquad
  \p_\X(\x) = \tfrac{1}{2} \sech(\pi\x/2),
  \label{eq:sech}
\end{equation}
as $n \rightarrow \infty$.  All moments now exist, since it is impossible for
$\Rv(n)$ to get infinitely close to the origin to generate large winding.  The
main idea behind the proof, introduced by Messulam \& Yor~\cite{Messulam1982},
is to consider two types of windings, the big windings (accumulated outside of
a certain disk) and the small windings (accumulated inside).  The same result
was obtained by Rudnick \& Hu~\cite{Rudnick1987} using a different method.
Berger~\cite{Berger1987} and Berger \& Roberts~\cite{Berger1988} studied a
walk with random angle but fixed step length and also observed convergence
to~\cref{eq:sech}.

Another way to regularize the original problem is to put a finite-sized
obstacle around the origin to prevent $\Rv(t)$ from entering its vicinity.
Specifically, let $\Rv(t)$ be a planar Brownian motion as above but with a
disk of radius $a$ carved out around the origin.  Assuming reflecting boundary
conditions on the disk's boundary, Grosberg \& Frisch~\cite{Grosberg2003}
showed that
\begin{equation}
  \frac{2\Theta(t)}{\log(4t/a^2)}
  \convdist
  \X,
  \qquad
  \p_\X(\x) = \tfrac{1}{2} \sech(\pi\x/2).
  \label{eq:GFsech}
\end{equation}
Revisiting Spitzer's case, they also gave a more precise normalization factor
for the winding angle, but we shall see in \cref{apx:revisit} that a small
correction is needed.

In the present paper we derive asymptotic winding laws for a Brownian particle
subjected to a point vortex at the origin.  The drift due to the point vortex
is in the tangential direction in the standard polar coordinates, and its
amplitude has magnitude~$\beta/r^2$, where~$\beta>0$ and~$r$ is the distance
from the origin.  This kind of vortical drift arises in inviscid fluid
motion~\cite{Lamb}.  Moreover, we shall see that this particular tangential
drift preserves eigensolutions of the separated Fokker--Planck equation in the
form of Bessel functions, albeit with complex argument, and is thus amenable
to analytical treatment.  We find the limit distribution for three natural
cases:

\paragraph{(i)} For the case where the particle winds around the origin
(\cref{fig:path_point,sec:pt}), we find convergence to an inverse
Gamma distribution, i.e.,
\begin{equation}
  \frac{\beta\log^2\!{\l(\nofrac{4t}{r_0^2\,\ee^\gamma}\r)}}{8\Theta(t)}
  \convdist
  \Gammadist(\tfrac{1}{2},\tfrac{1}{2}),
\end{equation}
where~$r_0$ is the initial radial coordinate, and $\gamma$ is the
Euler--Mascheroni constant.

\paragraph{(ii)} When the particle winds outside a disk of radius~$a$
(\cref{fig:path_inner_disk,sec:inner_disk}), the winding angle
distribution converges to the derivative~$\vartheta_2'$ of a second elliptic
theta function,
\begin{equation}
  \frac{4\Theta(t)}{\beta\,\log^2({4t}/{a^2\,\ee^{2\gamma}})}
  \convdist
  \X,
  \qquad
  \p_\X(\x)
  =
  -\tfrac{\pi}{2}\,\vartheta_2'
  \bigl(\tfrac{\pi}{2}\,,\,\ee^{-\pi^2 \x}\bigr)\,\chi_{(\x>0)}\,,
\end{equation}
where by convention the derivative in~$\vartheta_2'$ is with respect to the
first argument, and~$\chi$ is the indicator function.

\paragraph{(iii)} Finally, we show that for a particle winding inside an
annulus~$a < r < b$ (\cref{fig:path_annulus,sec:annulus}), the
winding angle distribution converges to a Gaussian with linear drift:
\begin{equation}
  \frac{\Theta(t)-A(t)\beta}{\sqrt{2A(t)}}
  \convdist
  N(0,1),
  \qquad
  A(t) = \frac{2 t}{b^2-a^2}\log(\nofrac{b}{a}).
\end{equation}

In all three cases we compare to numerical simulations to exhibit the
convergence to the limiting distribution.  Because the normalizers in the
first two cases are logarithmic in~$t$, it is crucial to have the
precise~$\Order{1}$ constants inside the~$\log$ when comparing to numerical
results, since otherwise astronomical values of~$t$ are required to see
convergence.

Existing results most closely related to ours either involve drift or bias the
Brownian motion in some way to promote winding.  Le Gall \&
Yor~\cite{LeGall1986} analyze a general planar Brownian motion with drift and
obtain a modification of Spitzer's law.  However, their drift must satisfy an
integrability condition that fails for a point vortex.  Comtet et
al.~\cite{Comtet1993b} studied diffusion with drift~$-\nabla \mathcal{U}(r)$
due to a radial potential in a disk of radius~$R$.  By picking the special
form $\mathcal{U} = (\alpha-1)\log(-\log(r/R_0))$, they found an
$\alpha$-stable L\'{e}vy distribution for the normalized winding angle
$\X(t) = \Theta(t)/t^{1/\alpha}$, with~$1 \leq \alpha^2 < 4$.  They found
convergence to a Gaussian winding angle for an annular domain.  Toby \&
Werner~\cite{Toby1995} showed that Brownian motion reflected at an angle along
an outer boundary also increases the winding.  Drossel \&
Kardar~\cite{Drossel1996} observed that chiral defects can promote winding.
Vakeroudis~\cite{Vakeroudis2015} considered winding of a complex
Ornstein--Uhlenbeck process.

\section{Brownian motion with tangential drift}

Consider the two-dimensional continuous-time stochastic process
$\Rv(t)=(R_1(t),R_2(t))$, starting at a point~$\Rv(0)=\rv_0 \ne \bm{0}$, and
obeying the SDE
\begin{equation}
  \dif\Rv
  =
  \Rv^\perp\,\Omega(\lvert\Rv\rvert,t) \dif t
  +
  \sqrt{2}\,\dif\bm{W}
  \label{eq:SDE}
\end{equation}
where~$\Rv^\perp = (-R_2,R_1)$ and~$\bm{W}(t)$ is a vector of two independent
standard Brownian motions.  The function~$\Omega(r,t)$ corresponds to a
tangential drift that spins particles around the origin.

Let $\p(\rv,t\,|\,\rv_0,0)$ be the transition probability density of $\Rv(t)$,
which satisfies the Fokker--Planck equation
\begin{equation}
  \D{\p}{t} + \Omega(r,t)\,\D{\p}{\theta} = \Delta \p,
  \qquad \p(\rv,0\,|\,\rv_0,0) = \delta(\rv-\rv_0).
  \label{eq:fund_pde}
\end{equation}
Our main goal is to study the case where $\Omega(r,t) = \nofrac{\beta}{r^2}$,
for a constant $\beta>0$, which corresponds to the flow field around a steady
two-dimensional point vortex~\cite{Lamb}.  Without loss of generality, we take
$\rv_0=(r_0,0)$ with~$r_0>0$.  Define in polar coordinates
\begin{equation}
  \P(r,\theta,t) = \p(\rv,t\,|\,\rv_0,0)
\end{equation}
where we suppressed the~$r_0$ dependence for simplicity.  \Cref{eq:fund_pde}
is now
\begin{equation}
  \D{\P}{t} + \frac{\beta}{r^2}\D{\P}{\theta}
  = \D{\P}{r} + \frac{1}{r}\D{\P}{r}
    +\frac{1}{r^2}\D{\P}{\theta},
  \qquad \P(r,\theta,0) = \frac{1}{r}\,\delta(r-r_0)\,\delta(\theta).
  \label{eq:fund_pde2}
\end{equation}
\Cref{eq:fund_pde2} has separated solutions of the form
\begin{equation}
  \ee^{-\lambda^2t}\,\ee^{\imi\mu\theta}\,\R(r)
\end{equation}
where $0 < \lambda \in \RN$, $\mu \in \RN$, and~$\R(r)$ obeys the eigenvalue
problem
\begin{equation}
  \R'' + \frac{1}{r}\R' + \biggl(\lambda^2
    - \frac{\kd_\mu^2}{r^2}\biggr)\R = 0,
  \qquad
  \kd_\mu = \sqrt{\mu^2+\imi\beta\mu}\,,
  \label{eq:char}
\end{equation}
where we take the branch for~$\kd_\mu$ with nonnegative real part.  Normally,
\cref{eq:fund_pde2} is solved with $2\pi$-periodic boundary conditions
in~$\theta$, so that~$\mu$ is a discrete eigenvalue.  However, to take the
winding into account we must allow for~$-\infty < \theta < \infty$, with the
boundary conditions that~$\P(r,\pm\infty,t)=0$.  From here, the solution to
\cref{eq:char} depends on boundary conditions in~$r$, which naturally divide
our discussion into three cases: winding around a point (\cref{sec:pt}),
around a disk (\cref{sec:inner_disk}), and inside an annulus
(\cref{sec:annulus}).

\begin{figure}
  \centering
  \subfloat[]{%
    \label{fig:path_point}%
    \includegraphics[width=.3\textwidth]{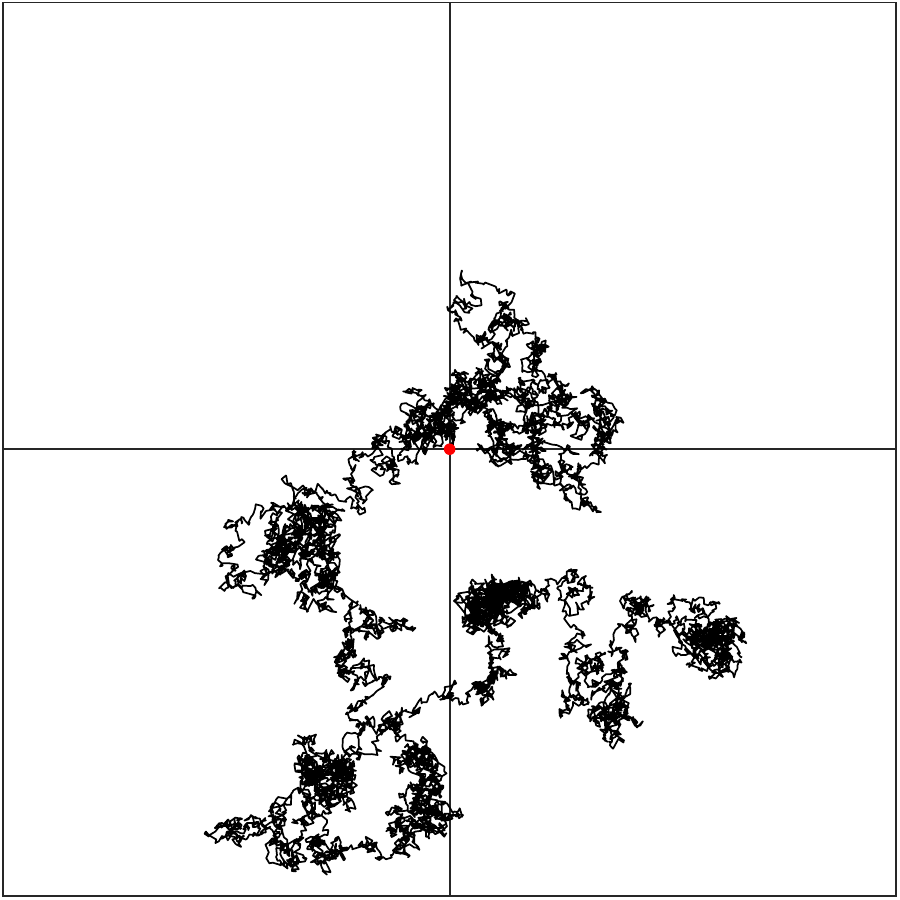}}
  \hspace{.01\textwidth}
  \subfloat[]{%
    \label{fig:path_inner_disk}%
    \includegraphics[width=.3\textwidth]{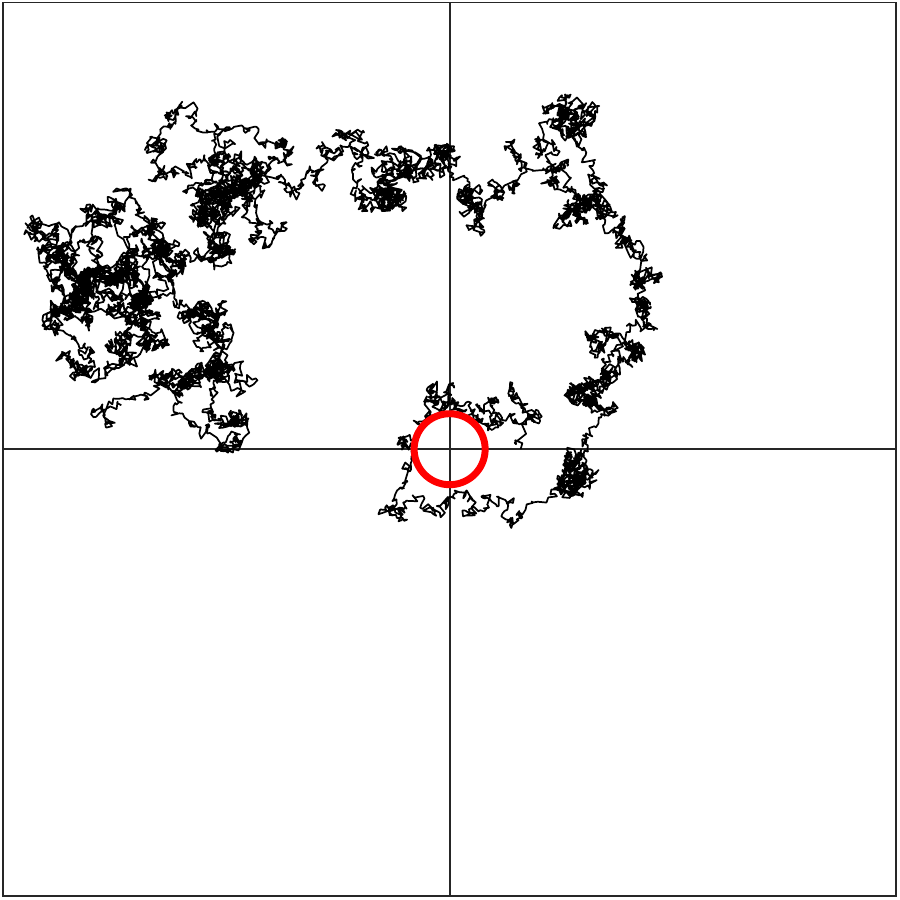}}
  \hspace{.01\textwidth}
  \subfloat[]{%
    \label{fig:path_annulus}%
    \includegraphics[width=.3\textwidth]{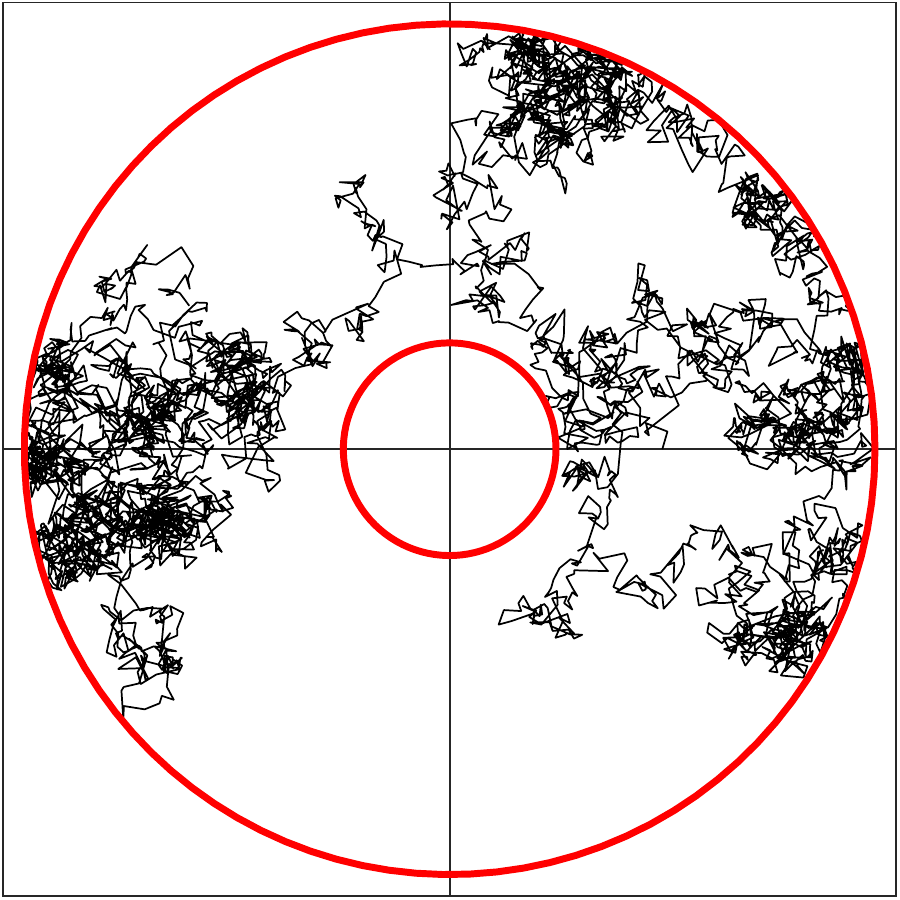}}
  \caption{(a) Winding of a Brownian particle around the origin, starting
    from~$\Rv(0)=(1,0)$, subjected to a central point vortex of
    strength~$\beta=1$.  (b) Winding around a disk of radius~$a=1/2$ with
    $\beta=3$; (c) Winding in an annulus with inner radius~$a=1/2$, outer
    radius~$b=2$, with~$\beta=1$.  Note the tendency of the path to wind
    counterclockwise because of the influence of the point vortex.}
  \label{fig:path}
\end{figure}

\section{Winding around a point}
\label{sec:pt}

For winding around the origin, the domain of \cref{eq:char} is
$0 < r < \infty$.  As a result, only Bessel functions of the first kind are
involved, so the solution to \cref{eq:fund_pde} is
\begin{equation}
  \P(r,\theta,t)
  =
  \frac{1}{2\pi}\int_{-\infty}^\infty \int_0^\infty
  \ee^{-\lambda^2t}\ee^{\imi\mu\theta}
  J_{\kd_\mu}(\lambda r)\,J_{\kd_\mu}(\lambda r_0)\,\lambda
  \dint \lambda \dint\mu\,,
  \label{eq:Ppoint}
\end{equation}
where~$\kd_\mu$ is defined in \cref{eq:char}, and $J_\nu$ is a Bessel
functions of the first kind of order~$\nu$.  Since we are interested in the
asymptotic behavior of $\P(r,\theta,t)$ as $t \rightarrow \infty$, the
$\lambda$ integral is dominated by small $\lambda$ (Watson's
Lemma~\cite{BenderOrszag}).  Therefore, we can choose $t$ large enough such
that
\begin{equation}
  J_{\kd_\mu}(\lambda r_0)
  \sim
  \frac{1}{\Gamma(1+\kd_\mu)}\left(\frac{\lambda r_0}{2}\right)^{\kd_\mu},
  \qquad
  \lambda r_0 \ll \sqrt{\lvert 1 + \kd_\mu\rvert}.
  \label{eq:Jsmallarg}
\end{equation}
The restriction~$\lambda r_0 \ll \sqrt{\lvert 1 + \kd_\mu\rvert}$ means that
we can use this to approximate~$J_{\kd_\mu}(\lambda r_0)$, but not
necessarily~$J_{\kd_\mu}(\lambda r)$, since~$r$ goes to infinity in the
integral.  With this approximation,~\cref{eq:Ppoint} becomes
\begin{align*}
  \P(r,\theta,t)
  &\sim \frac{1}{2\pi} \int_{-\infty}^\infty \int_0^\infty
  \ee^{-\lambda^2 t}
  \ee^{\imi\mu\theta}\, \frac{1}{\Gamma(1+\kd_\mu)}
  \left(\frac{\lambda r_0}{2}\right)^{\kd_\mu}\,
  J_{\kd_\mu}(\lambda r)\,\lambda \dint \lambda \dint \mu\\
  &= \frac{1}{2\pi}\int_{-\infty}^\infty \frac{1}{\Gamma(1+\kd_\mu)}
  \left(\frac{r_0}{2}\right)^{\kd_\mu} \ee^{\imi\mu\theta} \int_0^\infty
  \ee^{-\lambda^2 t}
  \lambda^{\kd_\mu+1}J_{\kd_\mu}(\lambda r) \dint\lambda\dint \mu\,.
\end{align*}
Then the winding angle distribution is given by
\begin{align}
  \W(\theta,t)
  &=
  \int_0^\infty \P(r,\theta,t)\,r \dint r\nonumber\\
  &\sim
  \frac{1}{2\pi}\int_{-\infty}^\infty
  \frac{1}{\Gamma(1+\kd_\mu)} \left(\frac{r_0}{2}\right)^{\kd_\mu}
  \ee^{\imi\mu\theta} \int_0^\infty \int_0^\infty
  \ee^{-\lambda^2 t}\lambda^{\kd_\mu+1}J_{\kd_\mu}(\lambda r)\,r
  \dint\lambda \dint r \dint\mu.
\end{align}
The~$r$ integral can be carried out analytically, and we have
\begin{align}
  \W(\theta,t)
  &\sim
  \frac{1}{2\pi}\int_{-\infty}^\infty
  \frac{\Gamma(1+\tfrac{\kd_\mu}{2})}{\Gamma(1+\kd_\mu)}
  \left(\frac{r_0}{2\sqrt{t}}\right)^{\kd_\mu}
  \ee^{\imi\mu\theta} \dint\mu\nonumber\\
  &\sim
  \frac{1}{2\pi}\int_{-\infty}^\infty
  \left(\frac{r_0\,\ee^{\gamma/2}}{2\sqrt{t}}\right)^{\kd_\mu}
  \ee^{\imi\mu\theta} \dint\mu\,,
  \label{eq:mu_int}
\end{align}
where~$\gamma = 0.577215\ldots$ is the Euler--Mascheroni constant.  In the
last step we expanded the integrand in small~$\kd_\mu$,
since~$(\nofrac{r_0}{\sqrt{t}}) \ll 1$.

Let~$A(t) = \sqrt{\nofrac{\beta}{8}}\,
\log\l(\nofrac{4t}{r_0^2\,\ee^\gamma}\r)$; then
\begin{equation}
  \W(\theta,t)
  \sim
  \frac{1}{2\pi}
  \int_{-\infty}^\infty
  \ee^{-A(t)\sqrt{\nofrac{2}{\beta}}\,\kd_\mu+\imi\mu\theta}\dint\mu\,.
\end{equation}
Since $A(t)\rightarrow \infty$ as $t\rightarrow \infty$, the integral is
dominated by small $\mu$. Note that from~\cref{eq:char},
as~$\mu \rightarrow 0$,
\begin{equation}
  \kd_\mu
  \sim
  \sqrt{\imi\beta\mu}
  =
  \sqrt{\frac{\beta|\mu|}{2}}\left(1 + \imi\sign\mu\right)
\end{equation}
where~$\sign$ is the signum function, leading to
\begin{align}
  \W(\theta,t)
  &=
  \frac{1}{2\pi} \int_{-\infty}^\infty
  \ee^{-A\sqrt{|\mu|}+\imi(\mu\theta - \sign(\mu) A\sqrt{|\mu|})}\dint\mu
  \nonumber\\
  &=
  \frac{1}{\pi}\int_{0}^\infty \ee^{-A\sqrt{\mu}}
  \cos(\mu\theta - A\sqrt{\mu})\dint\mu\nonumber\\
  &=
  \frac{1}{\sqrt{2\pi}\,A^2}\,
  \x^{-\nofrac{3}{2}}\,\ee^{-\nofrac{1}{2\x}}\, \chi_{(\x>0)},
  \qquad \x = \nofrac{\theta}{A^2}\,,
  \label{eq:small_mu_approx}
\end{align}
where $\chi$ is the indicator function.  We conclude that the normalized
winding angle at time $t \rightarrow \infty$ converges in distribution
according to
\begin{equation}
  \frac{8\Theta(t)}{\beta\log^2\!{\l(\nofrac{4t}{r_0^2\,\ee^{\gamma}}\r)}}
  \convdist
  \X,
  \qquad
  \p_\X(\x) = \frac{1}{\sqrt{2\pi}}\,
  \x^{-\nofrac{3}{2}}\,\ee^{-\nofrac{1}{2\x}}\,\chi_{(\x>0)}\,.
  \label{eq:ptlimit}
\end{equation}

\smallskip
\noindent\emph{Remarks}
\smallskip

\begin{enumerate}
\item All positive integer moments of $\W$ diverges, due to the probability of
  infinite winding around the point-like origin, as in Spitzer's
  law~\cref{eq:Spitzer}.
\item $\x^{-1}$ is Gamma-distributed:
  \begin{equation}
    \frac{\beta\log^2\!{\l(\nofrac{4t}{r_0^2\,\ee^\gamma}\r)}}{8\Theta(t)}
    \convdist
    \Gammadist(\tfrac{1}{2},\tfrac{1}{2}).
  \end{equation}
\item For~$\beta=0$ the leading order term of~$\kd_\mu$ is no
  longer~$\Order{\sqrt{\mu}}$, rendering \cref{eq:small_mu_approx} invalid. We
  need to go back to \cref{eq:mu_int} and use~$\kd_\mu \sim \lvert\mu\rvert$,
  which leads to
  \begin{align}
    \W(\theta,t)
    &\sim
    \frac{1}{\pi}
    \int_{0}^\infty \ee^{-\tfrac12\,\mu \log\l(\nofrac{4t}{r_0^2\,\ee^\gamma}\r)}
    \cos(\mu\theta)\dint\mu \nonumber \\
    &=
    \frac{1}{\pi}\,\frac{A}{A^2 + \theta^2}\,,
    \qquad
    A = \tfrac12\,\log\l(\nofrac{4t}{r_0^2\,\ee^\gamma}\r),
    \label{eq:GFimprovement}
  \end{align}
  which is Spitzer's law~\cref{eq:Spitzer} with the updated normalizer of
  Grosberg \& Frisch~\cite{Grosberg2003}.  Our calculation adds a
  correction~$\ee^\gamma$ to their normalizer.  (See \cref{apx:revisit}.)
\end{enumerate}

\begin{figure}
  \centering
  \includegraphics[width=.45\textwidth]{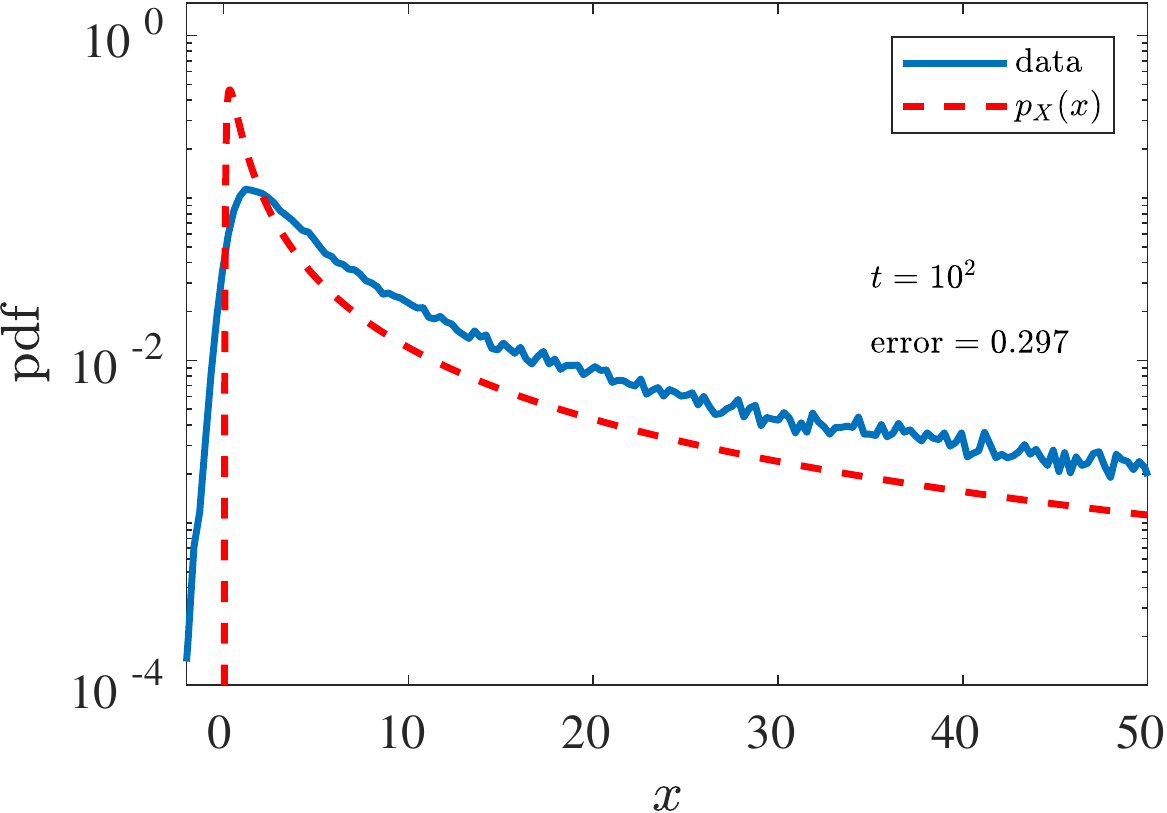}

  \includegraphics[width=.45\textwidth]{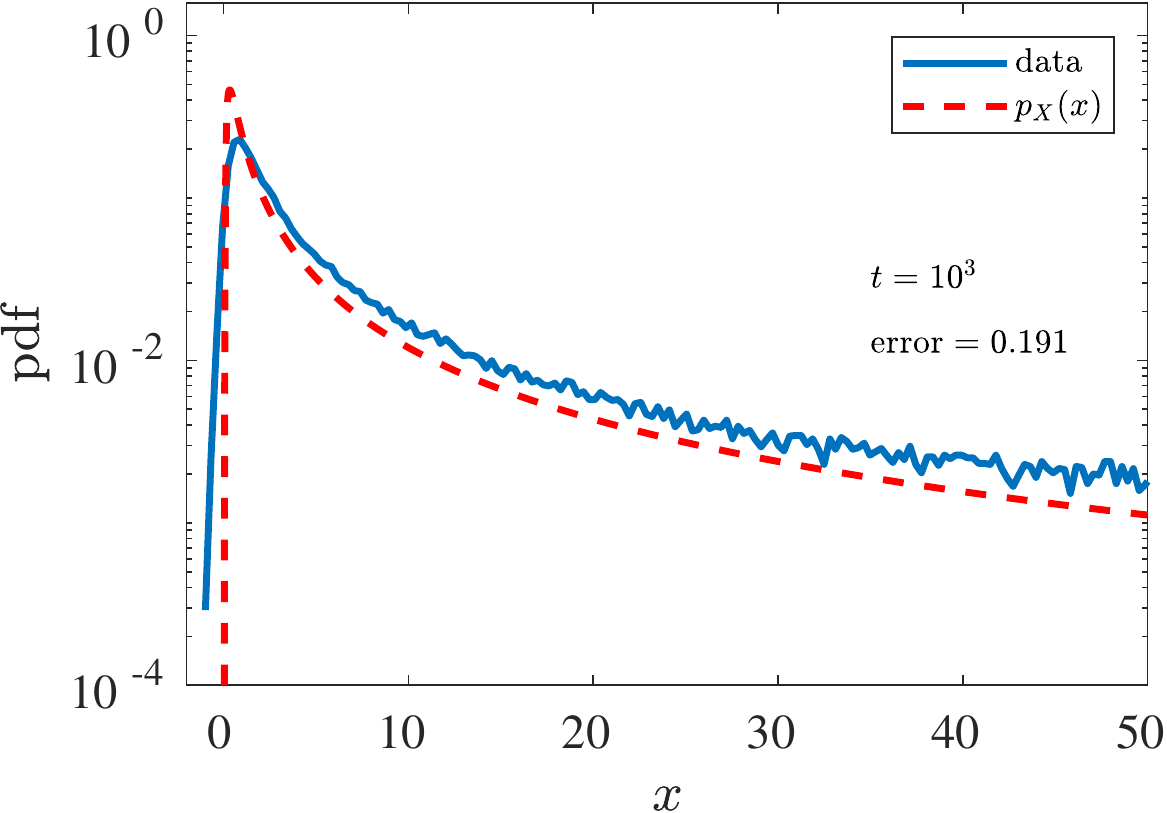}

  \includegraphics[width=.45\textwidth]{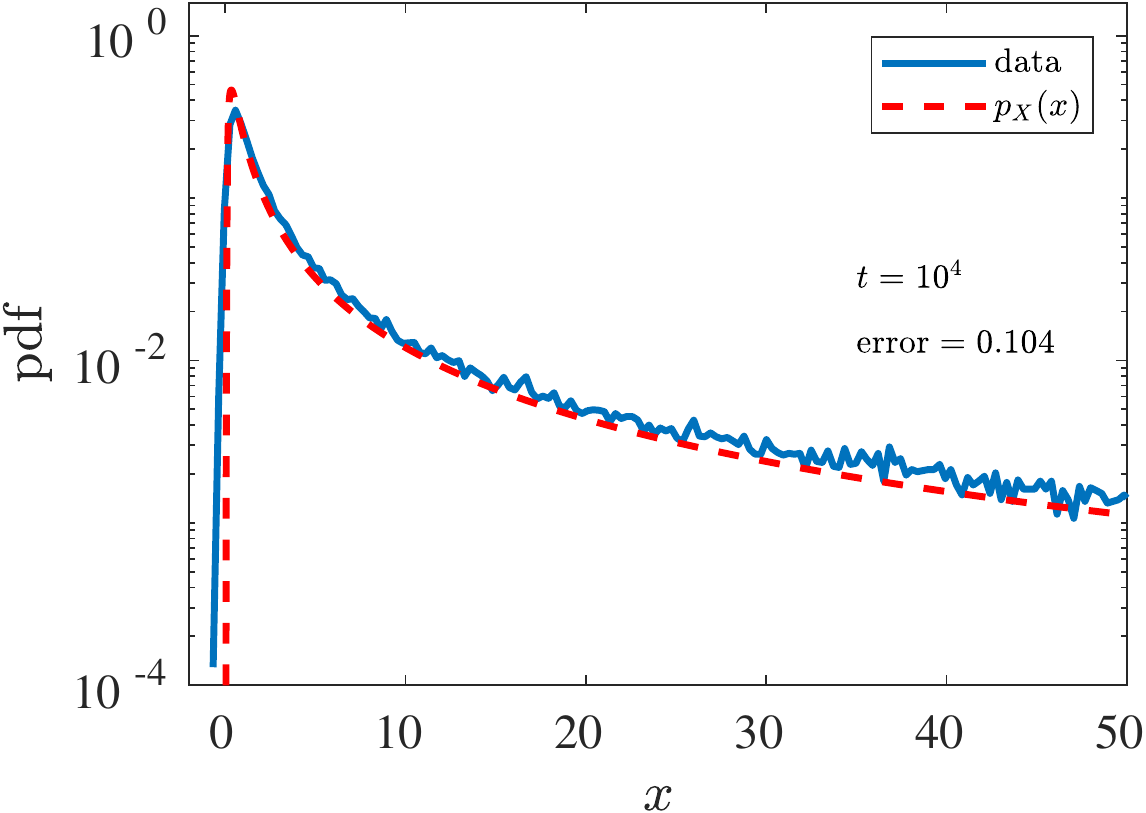}
  \caption{Numerical simulation of $10^5$ realizations of winding around the
    origin for a Brownian particle subjected to a point vortex.  Here
    $\beta=1$ and $r_0=1$.  The error is the $\Ltwo$ norm of the difference
    between the data and the limit distribution $\p_\X(\x)$ in
    \cref{eq:ptlimit}. We can see the convergence as $t \rightarrow \infty$.}
\end{figure}

\section{Winding around a disk}
\label{sec:inner_disk}

We now remove a disk of radius $a$ centered on the origin, with reflective
boundary conditions at its boundary.  The particle is in the
domain~\hbox{$a<|\Rv(t)|<\infty$} and so is prevented from coming close to the
origin.  The domain of~\cref{eq:char} is~\hbox{$a < r < \infty$}, with a
reflecting boundary condition~$\R'(a)=0$.  The bounded solution to
\cref{eq:char} can then be written
as~$\R(r) = Z_{\kd_\mu}(\lambda r, \lambda a)$, with
\begin{equation}
  Z_{\kd_\mu}(\lambda r, \lambda a)
  =
  \frac{J_{\kd_\mu}(\lambda r)\,Y'_{\kd_\mu}(\lambda a)
    - J'_{\kd_\mu}(\lambda a)\,Y_{\kd_\mu}(\lambda r)}
  {\sqrt{J'^2_{\kd_\mu}(\lambda a)+Y'^2_{\kd_\mu}(\lambda a)}}
  \label{eq:Z_func_def}
\end{equation}
where $\kd_\mu$ is as in~\cref{eq:char}, and $J_\nu$, $Y_\nu$ are Bessel
functions of the first and second kind of order $\nu$.  Thus the solution to
\cref{eq:fund_pde} is written as
\begin{equation}
  \P(r,\theta,t)
  =
  \frac{1}{2\pi}\int_{-\infty}^\infty \int_0^\infty
  \ee^{-\lambda^2 t}\,\ee^{\imi\mu\theta}
  \,Z_{\kd_\mu}(\lambda r,\lambda a)\,Z_{\kd_\mu}(\lambda r_0,\lambda a)
  \,\lambda \dint \lambda \dint\mu\,.
  \label{eq:inner_disk_solution}
\end{equation}
As in the previous \namecref{sec:pt}, $t\rightarrow \infty$ allows a
small-$\lambda$ approximation where we make use of the asymptotic forms
\begin{subequations}
\begin{alignat}{2}
  J_\nu(x) &\sim \frac{(x/2)^\nu}{\Gamma(1+\nu)},
  \qquad
  &Y_\nu(x) &\sim \frac{(x/2)^\nu}{\Gamma(1+\nu)} \cot \pi \nu -
  \frac{(x/2)^{-\nu}}{\Gamma(1-\nu)} \csc \pi \nu, \\
  J'_\nu(x) &\sim \frac{(x/2)^\nu \nu x^{-1}}{\Gamma(1+\nu)},
  \qquad
  &Y'_\nu(x) &\sim \frac{(x/2)^\nu \nu x^{-1}}{\Gamma(1+\nu)} \cot \pi \nu
  + \frac{(x/2)^{-\nu}\nu x^{-1}}{\Gamma(1-\nu)} \csc \pi \nu,
\end{alignat}
\label{eq:JYapprox}%
\end{subequations}
as $x\rightarrow 0$.  We thus have
\begin{equation}
  Z_{\kd_\mu}(\lambda r_0, \lambda a)
  \sim
  \frac{(r_0/a)^{\kd_\mu} + (r_0/a)^{-\kd_\mu}}
  {\sqrt{[(\lambda a/2)^{\kd_\mu}\Gamma(1-\kd_\mu)
      + (\lambda a/2)^{-\kd_\mu}\Gamma(1+\kd_\mu)]^2
      - 2\pi \kd_\mu \tan(\pi \kd_\mu/2)}}.
  \label{eq:Zsmalllambda}
\end{equation}
As we show in \cref{apx:smallk}, the $\mu$ integral is dominated by small
$\mu$, hence small $\kd_\mu$.  As $\kd_\mu\rightarrow 0$, we have
$\Gamma(1-\kd_\mu)\sim \ee^{\kd_\mu\gamma}$, where $\gamma$ is the
Euler--Mascheroni constant; $Z_{\kd_\mu}(\lambda r_0,\lambda a)$ further
simplifies
\begin{equation}
  Z_{\kd_\mu}(\lambda r_0,\lambda a)
  \sim
  \frac{\cosh(\kd_\mu\log(r_0/a))}{\cosh(\kd_\mu\log(\lambda \at/2))},
  \qquad
  \at \ldef a\ee^\gamma.
  \label{eq:Zsmalllambdasmallmu}
\end{equation}
For $Z_{\kd_\mu}(\lambda r,\lambda a)$ in \cref{eq:Z_func_def}, as
$\lambda\rightarrow 0$ we have
\begin{align}
  Z_{\kd_\mu}(\lambda r,\lambda a)
  &\sim
  \frac{J_{\kd_\mu}(\lambda r)\left((\lambda \at/2)^{\kd_\mu}\cot{\pi \kd_\mu}
      + (\lambda \at/2)^{-\kd_\mu}\csc{\pi \kd_\mu}\right)
    - Y_{\kd_\mu}(\lambda r)(\lambda \at/2)^{\kd_\mu}}
  {\sqrt{\left((\lambda \at/2)^{\kd_\mu}\cot{\pi \kd_\mu}
        + (\lambda \at/2)^{-\kd_\mu}\csc{\pi \kd_\mu}\right)^2
      + (\lambda \at/2)^{2\kd_\mu}}}\nonumber\\
  &\sim
  J_{\kd_\mu}(\lambda r) - Y_{\kd_\mu}(\lambda r)\,
  \frac{(\lambda \at/2)^{\kd_\mu}\sin{\pi \kd_\mu}}
  {(\lambda \at/2)^{\kd_\mu} + (\lambda \at/2)^{-\kd_\mu}}.
  \label{eq:Zkasym}
\end{align}

The time-asymptotic winding distribution is
\begin{equation}
  \W(\theta,t)
  \sim
  \frac{1}{2\pi} \int_{-\infty}^\infty \cosh(\kd_\mu\log(\nofrac{r_0}{a}))\,
  \ee^{\imi\mu\theta} \, \I_{\kd_\mu}(t) \dint \mu
\end{equation}
where
\begin{equation}
  \I_{\kd_\mu}(t)
  =
  \int_a^\infty \int_0^\infty \ee^{-\lambda^2 t}
  Z_{\kd_\mu}(\lambda r,\lambda a)
  \sech(\kd_\mu\log(\lambda \at/2))\,\lambda r
  \dint \lambda \dint r.
  \label{eq:lambda_and_r_integral0}
\end{equation}
In \cref{apx:Ikasym} we show that for small $\kd_\mu$
and~$t \rightarrow \infty$,
\begin{equation}
  \I_{\kd_\mu}(t) \sim \sech(\kd_\mu\log(\at/2\sqrt{t})).
  \label{eq:Ikasym}
\end{equation}
Hence,
\begin{align}
  \W(\theta,t)
  &=
  \frac{1}{2\pi} \int_{-\infty}^\infty
  \cosh(\kd_\mu\log(r_0/a))\, \I_{\kd_\mu}(t) \, \ee^{\imi\mu\theta} \dint\mu
  \nonumber\\
  &\sim \frac{1}{2\pi} \int_{-\infty}^\infty \cosh(\kd_\mu\log(r_0/a))
  \,\sech(\kd_\mu\log(\at/2\sqrt{t}))\,\ee^{\imi\mu\theta} \dint\mu
  \nonumber\\
  &= \frac{1}{2\pi A^2} \int_{-\infty}^\infty
  \cosh(\sqrt{\imi \tau}B)
  \sech(\sqrt{\imi \tau})\, \ee^{\imi\tau \x} \dint \tau
\end{align}
where
\begin{equation}
  A(t) = \tfrac12\sqrt{\beta}\log(4t/\at^2),
  \quad
  B(t) = \tfrac{\sqrt{\beta}}{A(t)}\log(r_0/a),
  \quad
  \tau = \mu A^2,
  \quad
  \x = \theta/A^2.
\end{equation}

The $\tau$ integral can be evaluated by computing residues at
\begin{equation}
  \tau_n = \imi\pi^2\bigl(n+\tfrac{1}{2}\bigr)^2,
  \qquad
  n= 1,2,3,\dots.
\end{equation}
We thus have
\begin{align}
  \W(\theta,t)
  &=
  \frac{2\pi \imi}{2\pi A^2}
  \sum_{n=0}^\infty \pi\imi\, (2n+1)\,(-1)^{n+1}
  \,\ee^{-(2n+1)^2\pi^2\x/4}
  \cos\bigl(\pi \bigl(n+\tfrac{1}{2}\bigr)B\bigr)\,\chi_{(\x>0)}\nonumber\\
  &\sim
  \frac{\pi}{A^2}
  \sum_{n=0}^\infty (-1)^n\,(2n+1)
  \,\ee^{-(2n+1)^2\pi^2\x/4}\,\chi_{(\x>0)},
\end{align}
where $\chi$ is the indicator function.  Introduce the second elliptic theta
function
\begin{equation}
  \vartheta_2(z,q) = 2\sum_{n=0}^\infty q^{(2n+1)^2/4}\cos((2n+1)z)
\end{equation}
and by convention,
\begin{equation}
  \vartheta'_2(z,q) \ldef \D{\vartheta_2}{z}(z,q).
\end{equation}
Then the asymptotic winding law for the random angle~$\Theta(t)$ can be
expressed as
\begin{equation}
  \frac{4\Theta(t)}{\beta\,\log^2({4t}/{a^2\,\ee^{2\gamma}})}
  \convdist
  \X,
  \qquad
  \p_\X(\x)
  =
  -\tfrac{\pi}{2}\,\vartheta_2'
  \bigl(\tfrac{\pi}{2}\,,\,\ee^{-\pi^2 \x}\bigr)\,\chi_{(\x>0)}\,.
  \label{eq:disklimit}
\end{equation}
Note that now all the moments exist.  The asymptotic
distribution~\cref{eq:disklimit} is compared to numerical simulations in
\cref{fig:inner_disk}.

\begin{figure}
  \centering
  \includegraphics[width=.45\textwidth]{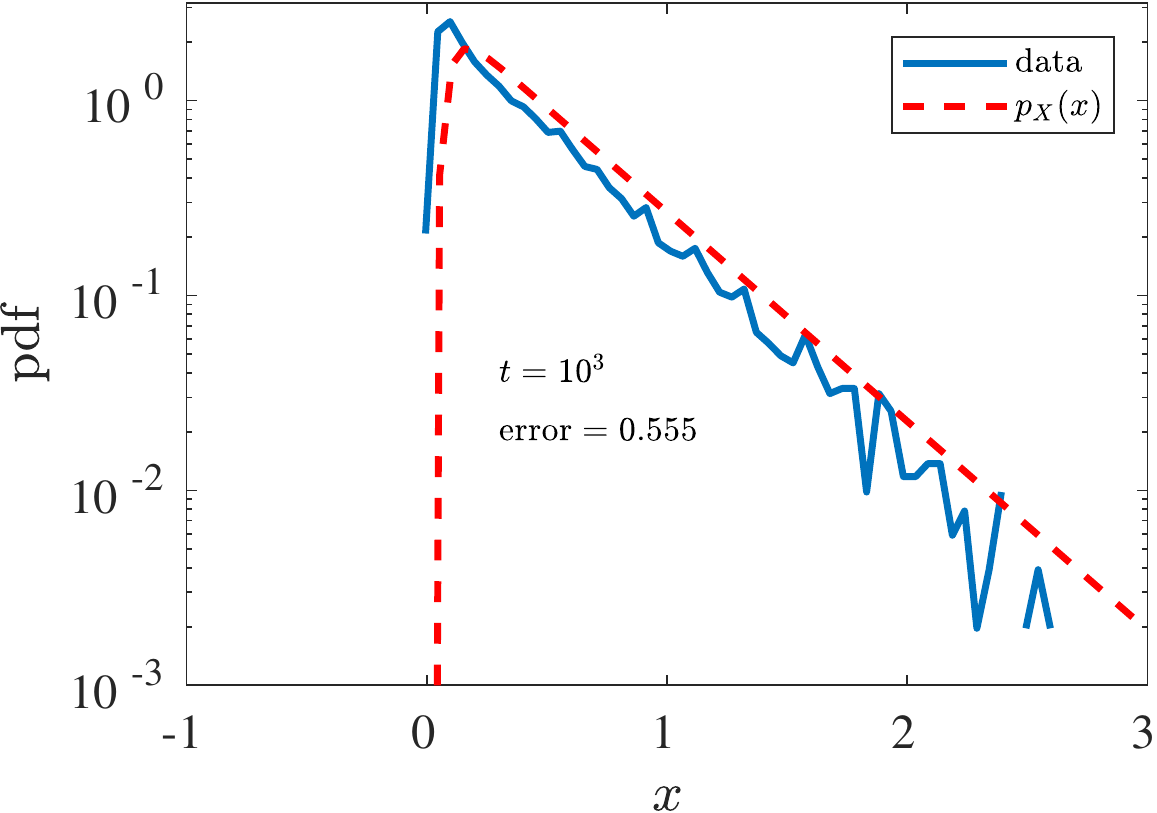}

  \includegraphics[width=.45\textwidth]{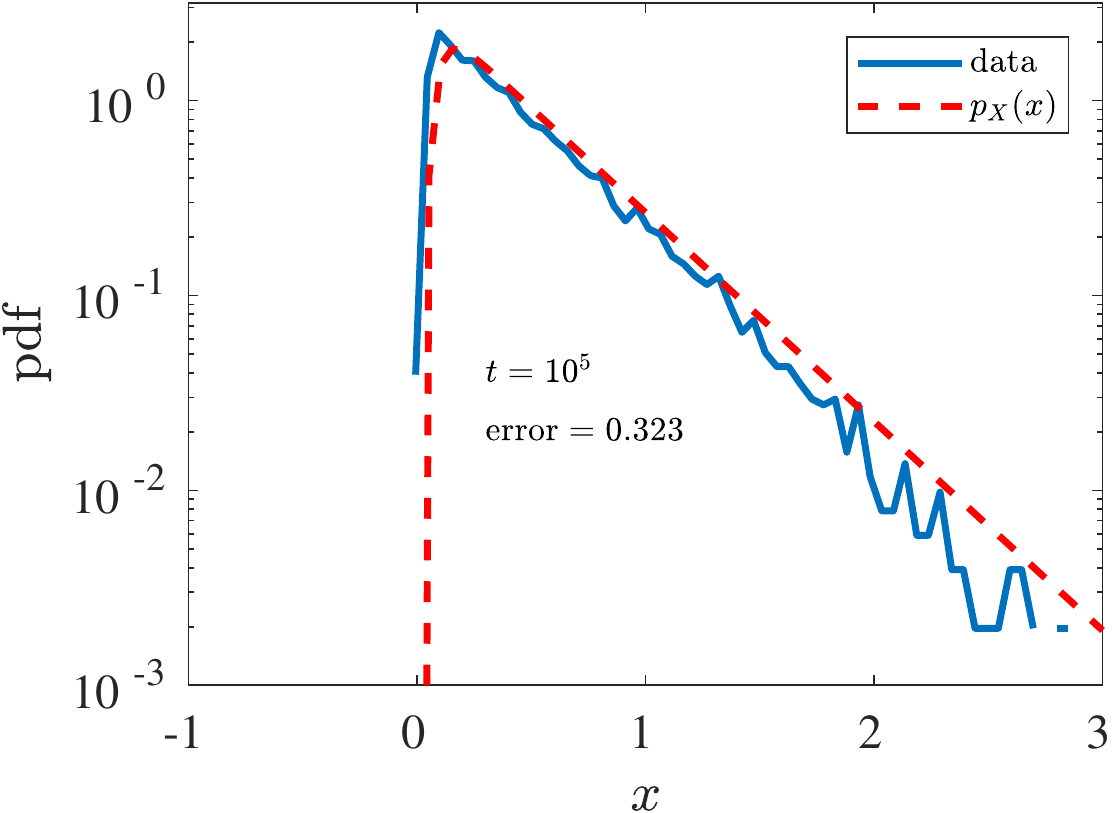}

  \includegraphics[width=.45\textwidth]{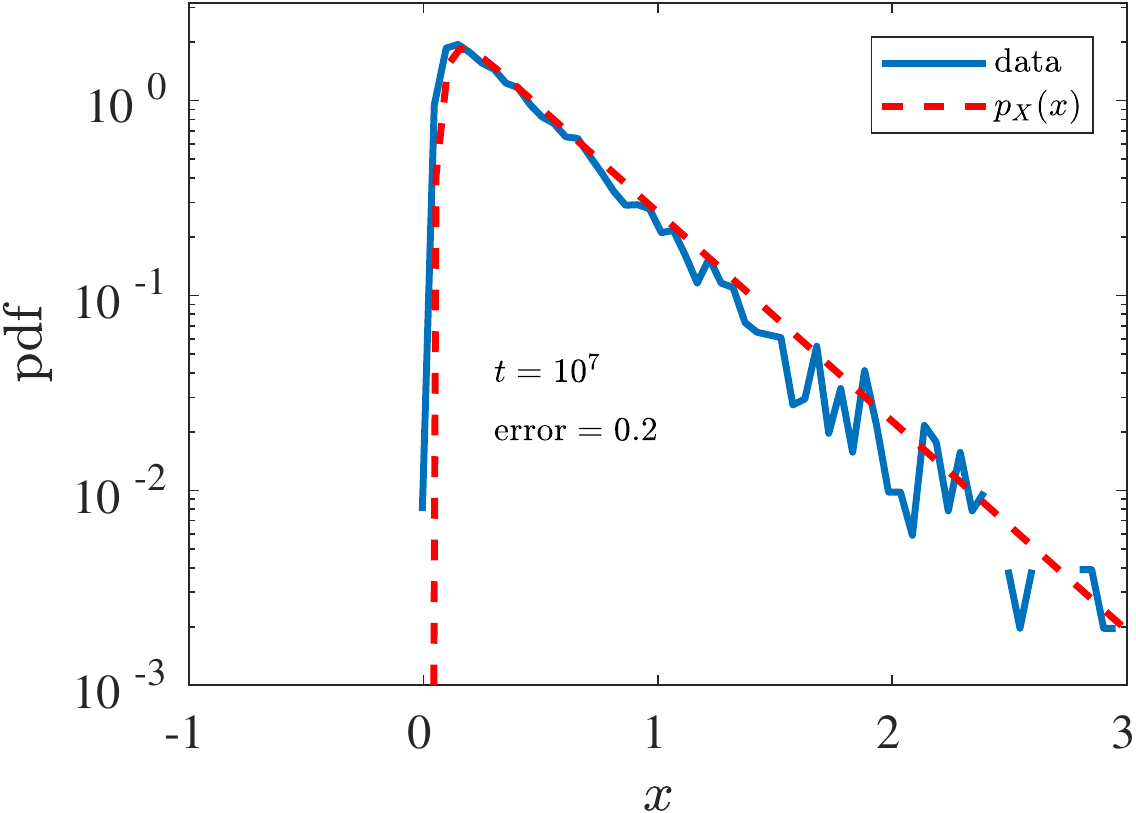}
  \caption{Numerical simulation of $10^4$ realizations of winding around a
    disk of radius~$a=1/10$ for a Brownian particle subjected to a point
    vortex of strength~$\beta=3$.  The error is the $\Ltwo$ norm of the
    difference between the data and the limit distribution $\p_\X(\x)$ in
    \cref{eq:disklimit}. We can see the convergence as
    $t \rightarrow \infty$.}
  \label{fig:inner_disk}
\end{figure}

\section{Winding in an annulus}
\label{sec:annulus}

Finally, consider the case where the particle is confined to an annulus
$a<|\Rv(t)|<b$, with both boundaries being reflective.  Then the solution to
\cref{eq:char} is modified accordingly to
\begin{equation}
  Z_{\kd_\mu}(\lambda r)
  =
  \frac{1}{m_{\lambda,\mu}}
  \left(-J_{\kd_\mu}(\lambda r)\, Y'_{\kd_\mu}(\lambda a)
    + J'_{\kd_\mu}(\lambda a)\, Y_{\kd_\mu}(\lambda r)\right)
\end{equation}
where~$\kd_\mu$ is as in~\cref{eq:char}, $m_{\lambda,\mu}$ is a normalization
factor with
\begin{subequations}
\begin{gather}
  \int_a^b Z^2_{\kd_\mu}(\lambda r)\, r\dint r = 1,
  \label{eq:normConstr} \\
\intertext{and}
  -J'_{\kd_\mu}(\lambda b)\, Y'_{\kd_\mu}(\lambda a)
  + J'_{\kd_\mu}(\lambda a)\, Y'_{\kd_\mu}(\lambda b)
  = 0,
  \label{eq:boundaryConstr}
\end{gather}
\end{subequations}
in order to satisfy the reflective boundary conditions
\begin{equation}
  \D{Z_{\kd_\mu}}{r}(\lambda a) = \D{Z_{\kd_\mu}}{r}(\lambda b) = 0.
\end{equation}
The solution to \cref{eq:fund_pde} now involves a discrete sum over~$\lambda$:
\begin{equation}
  \P(r,\theta,t)
  =
  \frac{1}{2\pi}\int_{-\infty}^\infty\sum_{\lambda}
  \ee^{-\lambda^2 t}\,\ee^{\imi \mu \theta}
  Z_{\kd_\mu}(\lambda r)\,Z_{\kd_\mu}(\lambda r_0)\dint\mu.
  \label{eq:densityFormula}
\end{equation}
The sum is over all solutions to \cref{eq:boundaryConstr} with positive real
part, since we require $\P(r,\theta,t)$ to stay finite as
$t\rightarrow \infty$ and $Z_\mu(\lambda r)$ is undefined for $\lambda =
0$. For fixed $\mu$, denote $\lambda_{\kd_\mu}$ the solution to
\cref{eq:boundaryConstr} with smallest positive real part. For large $t$, the
summation over $\lambda$ is dominated by the leading term,
\begin{equation}
  \ee^{-\lambda_{\kd_\mu}^2 t}\,\ee^{\imi \mu \theta}\,
  Z_{\kd_\mu}(\lambda_{\kd_\mu} r)\,Z_{\kd_\mu}(\lambda_{\kd_\mu} r_0).
\end{equation}
Using the approximations~\cref{eq:JYapprox} for $J_\nu$ and $Y_\nu$ for small
argument, we have
\begin{align*}
  Z_{\kd_\mu}(\lambda_{\kd_\mu} r)
  &\sim
  -\frac{\kd_\mu \csc \pi \kd_\mu}
  {m_{\lambda_{\kd_\mu},\kd_\mu}\lambda_{\kd_\mu}
    a \Gamma(1+\kd_\mu)\Gamma(1-\kd_\mu)}
  \left[\left(\frac{r}{a}\right)^{\kd_\mu}
    + \left(\frac{r}{a}\right)^{-\kd_\mu}\right] \\
  &=
  \frac{\cosh(\kd_\mu\log(r/a))}{C_{\kd_\mu}(a,b)}
\end{align*}
where the constant $C_{\kd_\mu}(a,b)$ is determined by
\cref{eq:normConstr}.  Hence,
\begin{align}
  \P(r,\theta,t)
  &\sim
  \frac{1}{2\pi}\int_{-\infty}^\infty  \ee^{-\lambda_{\kd_\mu}^2 t}\,\ee^{\imi \mu\theta}
  \,\frac{\cosh(\kd_\mu\log(r/a))\cosh(\kd_\mu\log(r_0/a))}
  {C_{\kd_\mu}^2(a,b)}\dint \mu\,.
  \label{eq:mu_int2}
\end{align}
Before proceeding further, we prove the following:
\begin{proposition}
  As $\kd_\mu\rightarrow 0$,
  \begin{enumerate}
  \item $\lambda_{\kd_\mu} \rightarrow 0$.
  \item $\displaystyle \lambda_{\kd_\mu}^2
    = \frac{2\kd_\mu^2}{b^2-a^2}\log(\nofrac{b}{a}) + o(\kd_\mu^2)$.
  \end{enumerate}
\end{proposition}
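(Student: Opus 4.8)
The plan is to recognize $\lambda_{\kd_\mu}$ as the eigenvalue of smallest real part of a one–parameter family of radial Sturm--Liouville problems, and to treat $\kd_\mu^2 = \mu^2 + \imi\beta\mu$ as a small complex parameter that vanishes as $\mu\to 0$. Writing \cref{eq:char} in self-adjoint form, $\lambda^2$ is an eigenvalue of
\begin{equation*}
  L_\varepsilon \R \ldef -\frac1r\frac{\dif}{\dif r}\!\l(r\frac{\dif \R}{\dif r}\r) + \frac{\varepsilon}{r^2}\,\R = \lambda^2\,\R,
  \qquad \R'(a) = \R'(b) = 0,
\end{equation*}
on $\Ltwo([a,b],\,r\dif r)$ with $\varepsilon = \kd_\mu^2$, and the reflecting conditions \cref{eq:boundaryConstr} are exactly the statement that $\lambda^2$ is such an eigenvalue. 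At $\varepsilon=0$, $L_0$ is the radial Neumann Laplacian on the annulus: self-adjoint with compact resolvent, its lowest eigenvalue is $\lambda^2=0$, simple, with eigenfunction $\R_0\equiv 1$, and the rest of its spectrum is bounded away from $0$ (the next eigenvalue is the square of the smallest positive zero of the Bessel cross-product $J'_0(\lambda a)Y'_0(\lambda b)-J'_0(\lambda b)Y'_0(\lambda a)$, which is $\Order{1}$). Since $\varepsilon/r^2$ is a bounded multiplication operator on $[a,b]$, analytic perturbation theory gives, for $|\varepsilon|$ small, a unique eigenvalue $\Lambda(\varepsilon)$ of $L_\varepsilon$ near $0$; it is simple, $\varepsilon\mapsto\Lambda(\varepsilon)$ is analytic with $\Lambda(0)=0$, and it has the smallest real part of all eigenvalues. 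Identifying $\Lambda(\kd_\mu^2)=\lambda_{\kd_\mu}^2$ and letting $\mu\to 0$, so that $\kd_\mu=\sqrt{\mu^2+\imi\beta\mu}\to 0$ along the branch with nonnegative real part, yields $\lambda_{\kd_\mu}^2\to 0$ and hence part~1.

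For part~2 I expand to first order. Put $\Lambda(\varepsilon)=\varepsilon\Lambda_1+\Order{\varepsilon^2}$ and $\R=\R_0+\varepsilon\R_1+\dots$ with $\R_1'(a)=\R_1'(b)=0$; the $\Order{\varepsilon}$ terms of $L_\varepsilon\R=\Lambda\R$, using $\R_0\equiv1$, give
\begin{equation*}
  -\frac1r\frac{\dif}{\dif r}\!\l(r\frac{\dif\R_1}{\dif r}\r) = \Lambda_1 - \frac{1}{r^2}\,.
\end{equation*}
Multiplying by $r$, integrating over $[a,b]$, and using the Neumann conditions on $\R_1$ to kill the boundary term $[\,r\R_1'\,]_a^b$, one gets $0=\Lambda_1(b^2-a^2)/2-\log(\nofrac{b}{a})$, i.e.
\begin{equation*}
  \Lambda_1 = \frac{2}{b^2-a^2}\,\log(\nofrac{b}{a}) = \frac{\langle\R_0,\,r^{-2}\R_0\rangle}{\langle\R_0,\R_0\rangle}\,,
\end{equation*}
which is just the first-order (Feynman--Hellmann) formula for $\dif\Lambda/\dif\varepsilon$ at $\varepsilon=0$, the Fredholm solvability condition for the displayed equation. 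Substituting $\varepsilon=\kd_\mu^2$,
\begin{equation*}
  \lambda_{\kd_\mu}^2 = \Lambda(\kd_\mu^2) = \frac{2\kd_\mu^2}{b^2-a^2}\,\log(\nofrac{b}{a}) + \Order{\kd_\mu^4} = \frac{2\kd_\mu^2}{b^2-a^2}\log(\nofrac{b}{a}) + o(\kd_\mu^2),
\end{equation*}
which is the claim.

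I expect the main obstacle to be the bookkeeping in the first paragraph: one must be sure that the root of \cref{eq:boundaryConstr} singled out as $\lambda_{\kd_\mu}$—the one with smallest positive real part—is exactly the branch $\Lambda(\varepsilon)$ emanating from $0$, rather than one of the $\Order{1}$ eigenvalues of $L_0$. This needs the spectral gap of $L_0$ above $\lambda^2=0$ together with smallness of the perturbation in a resolvent sense, so that the eigenvalues move continuously and the isolated-eigenvalue perturbation theory applies; the complex, non-self-adjoint nature of $\varepsilon=\kd_\mu^2$ is harmless because $1/r^2$ is bounded. As a concrete alternative to the abstract argument, one can insert the next-order $\Order{(\lambda a)^2}$, $\Order{(\lambda b)^2}$ corrections to the small-argument expansions \cref{eq:JYapprox} into \cref{eq:boundaryConstr}: the leading terms cancel identically, and the surviving terms, expanded for small $\kd_\mu$, reproduce $\lambda_{\kd_\mu}^2\sim\Lambda_1\kd_\mu^2$ after a short computation, giving an independent check of both parts.
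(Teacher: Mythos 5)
Your proof is correct, and it takes a genuinely different route from the paper's, most visibly in part~2. The paper splits the operator at fixed $\mu$ into a real Sturm--Liouville part $\mathcal{L}$ (which retains the $\mu^2$ term) plus the scalar perturbation $\mathcal{M}=-\imi\beta\mu$, bounds the lowest real eigenvalue by a Rayleigh quotient with the test function $\phi\equiv 1$, and invokes Kato's spectral-distance bound to conclude part~1; for part~2 it works directly with the transcendental condition \cref{eq:boundaryConstr}, rewriting it as $g_{\kd_\mu}(\lambda a)=g_{\kd_\mu}(\lambda b)$ with $g_{\kd_\mu}=\nofrac{Y'_{\kd_\mu}}{J'_{\kd_\mu}}$, inserting the next-order small-argument Bessel asymptotics, and matching coefficients in $\kd_\mu$ --- essentially the ``concrete alternative'' you sketch at the end. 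You instead treat the whole term $\kd_\mu^2/r^2$ as a bounded analytic perturbation of the radial Neumann Laplacian, so that parts~1 and~2 fall out of a single application of isolated-simple-eigenvalue perturbation theory, with the coefficient $\Lambda_1=\langle \R_0,r^{-2}\R_0\rangle/\langle\R_0,\R_0\rangle=2\log(\nofrac{b}{a})/(b^2-a^2)$ obtained from the Fredholm solvability condition rather than from Bessel expansions; your computation of $\Lambda_1$ checks out. What your approach buys is unity and a stronger conclusion ($\Order{\kd_\mu^4}$ rather than $o(\kd_\mu^2)$), at the cost of having to verify the identification of the branch $\Lambda(\varepsilon)$ emanating from $0$ with the root of \cref{eq:boundaryConstr} of smallest positive real part --- you correctly flag that this needs the spectral gap of $L_0$ above its simple zero eigenvalue, and your observation that the non-self-adjointness is harmless because $r^{-2}$ is bounded on $[a,b]$ is exactly the right point. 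The paper's explicit Bessel manipulation is more elementary and self-contained but obscures why the answer is just a weighted average of $r^{-2}$ against the constant mode.
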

\begin{proof} For the first claim, let
  \begin{equation}
    \mathcal{L} = \frac{d}{dr}\left(r\frac{d}{dr}\right) - \mu^2,
    \qquad
    \mathcal{M} = -\imi\beta\mu\,.
  \end{equation}
  Then \cref{eq:char} is rewritten as
  \begin{equation}
    (\mathcal{L} + \mathcal{M})\,\R(r) + \lambda^2 r^2 \R(r) = 0.
    \label{eq:complexEV}
  \end{equation}
  Consider the eigenvalue problem
  \begin{equation}
    \mathcal{L}\,\phi_\nu(r) + \nu r^2 \phi_\nu(r) = 0,
    \quad a<r<b,
    \qquad
    \phi_\nu'(a) = \phi_\nu'(b) = 0.
    \label{eq:realEV}
  \end{equation}
  Sturm--Liouville theory states that the eigenvalues are real and countable,
  $\nu_0<\nu_1<\ldots<\nu_n<\ldots$, with
  $\lim_{n\rightarrow \infty} \nu_n = \infty$. By the regularity requirement
  of the eigenfunctions as $r\rightarrow \infty$, we have~$\nu_0>0$.  This
  smallest eigenvalue can be bounded by the Rayleigh quotient:
  \begin{equation}
    \nu_0
    \leq
    -\frac{\int_a^b\phi\,\mathcal{L}\,\phi \dint r}{\int_a^b\phi^2r^2\dint r}
    =
    \frac{\int_a^b (r\phi'^2(r) + \mu^2\phi(r))\dint r}
    {\int_a^b \phi^2(r) r^2 \dint r}
  \end{equation}
  where $\phi$ is a test function satisfying $\phi'(a) = \phi'(b) = 0$. By
  choosing $\phi(r) = 1$, we have
  \begin{equation}
    \nu_0 \leq \frac{3(b-a) }{b^3-a^3}\mu^2
    \rightarrow 0 ~\text{ as } \mu \rightarrow 0.
  \end{equation}
  Since $\mathcal{L}$ and $\mathcal{M}$ commute, by
  \cite[p.~209]{Kato2013perturbation} the distance between the spectrum of
  $\mathcal{L}+\mathcal{M}$ and that of $\mathcal{L}$ is bounded by
  $\lVert\mathcal{M}\rVert = \beta\mu\rightarrow 0$.  Hence,
  \begin{equation}
    |\lambda_{\kd_\mu}^2 - \nu_0|
    \leq
    \mu \rightarrow 0 \quad\text{ as }\quad \mu \rightarrow 0
  \end{equation}
  and thus~$\lambda_{\kd_\mu}^2 \rightarrow \nu_0 \rightarrow 0$ as
  $\mu \rightarrow 0$, which proves the first part.

  For the second part, define
  \begin{equation}
    g_{\kd_\mu}(x) \ldef \nofrac{Y'_{\kd_\mu}(x)}{J'_{\kd_\mu}(x)}\,.
  \end{equation}
  Then \cref{eq:boundaryConstr} is rewritten as
  \begin{equation}
    g_{\kd_\mu}(\lambda a) = g_{\kd_\mu}(\lambda b)
  \end{equation}
  and using small-$x$ asymptotics
  \begin{align*}
    g_{\kd_\mu}(x)
    &\sim
    \cot(\pi \kd_\mu)
    + \left(\frac{x}{2}\right)^{-2\kd_\mu}
    \frac{\Gamma(\kd_\mu)\Gamma(1+\kd_\mu)}{\pi}
    + \left(\frac{x}{2}\right)^{-2\kd_\mu+2}
    \frac{2(2-\kd_\mu^2)\, \Gamma^2(\kd_\mu)}{\pi (1-\kd_\mu^2)} \\
    &=
    \cot(\pi \kd_\mu)
    + \left(\frac{x}{2}\right)^{-2\kd_\mu}
    \frac{\Gamma^2(\kd_\mu)}{\pi}
    \left[\kd_\mu
      + \left(\frac{x}{2}\right)^2\frac{2(2-\kd_\mu^2)}{(1-\kd_\mu^2)}
    \right].
  \end{align*}
  So $\lambda_{\kd_\mu}$ satisfies
  \begin{equation}
    a^{-2\kd_\mu}
    \left[\kd_\mu + \left(\frac{\lambda_{\kd_\mu} a}{2}\right)^2
      \frac{2(2-\kd_\mu^2)}{(1-\kd_\mu^2)}\right]
    =
    b^{-2\kd_\mu}
    \left[\kd_\mu + \left(\frac{\lambda_{\kd_\mu} b}{2}\right)^2
      \frac{2(2-\kd_\mu^2)}{(1-\kd_\mu^2)}\right].
  \end{equation}
  Taking $\kd_\mu \rightarrow 0$, we have
  \begin{equation}
    \left(\nofrac{b}{a}\right)^{2\kd_\mu}
    \sim
    \frac{\kd_\mu + \lambda_{\kd_\mu}^2 b^2}{\kd_\mu + \lambda_{\kd_\mu}^2 a^2}
  \end{equation}
  and then expanding in $\kd_\mu$ and comparing coefficients yields
  \begin{equation}
    \lambda_{\kd_\mu}^2
    =
    \frac{2\kd_\mu^2}{b^2-a^2}\log\left(\nofrac{b}{a}\right) + o(\kd_\mu^2).
  \end{equation}
\end{proof}

Now we return to \cref{eq:mu_int2}.  Since $t$ is large, the integral is
dominated by small $\lambda_{\kd_\mu}$, hence small values of $\kd_\mu$ and
$\mu$. It can be shown that for small $\mu$, and hence small $\kd_\mu$, the
quantity
\begin{equation}
  \frac{\cosh(\kd_\mu
    \log(r/a))\cosh(\kd_\mu\log(r_0/a))}{C^2(\kd_\mu,a,b)}
\end{equation}
does not depend on $\kd_\mu$, due to the constraint \cref{eq:normConstr}.  In
fact, for small $\kd_\mu$,
\begin{equation}
  \frac{\cosh(\kd_\mu\log(r/a))}{C_{\kd_\mu}(a,b)}
  \longrightarrow \sqrt\frac{2}{b^2-a^2}
  \quad
  \text{as }
  \quad
  \kd_\mu\rightarrow 0.
\end{equation}
Hence
\begin{equation}
  \P(r,\theta,t)
  \sim
  \frac{1}{\pi(b^2-a^2)}\int_{-\infty}^\infty
  \ee^{-\frac{2 t}{b^2-a^2}\log(\nofrac{b}{a})(\mu^2+\imi \beta \mu)}\ee^{\imi \mu\theta}
  \dint\mu.
\end{equation}
This gives the winding angle distribution
\begin{equation}
  \W(\theta,t)
  \sim
  \frac{1}{2\pi}\int_{-\infty}^\infty
  \ee^{-A\mu^2}\ee^{\imi\mu(\theta-A\beta)}\dint \mu
  = \frac{1}{\sqrt{4A}}\exp\{-\nofrac{(\theta-A\beta)^2}{4A}\}
\end{equation}
where
\begin{equation}
  A(t) = \frac{2 t}{b^2-a^2}\log(\nofrac{b}{a}).
\end{equation}
Thus, the asymptotic winding law for~$\Theta(t)$ as~$t \rightarrow \infty$ is
given by
\begin{equation}
  \frac{\Theta(t)-A(t)\beta}{\sqrt{2A(t)}}
  \convdist
  N(0,1)
  \label{eq:annnuluslimit}
\end{equation}
where~$N(0,1)$ denotes the standard Gaussian distribution.  We compare this to
numerical simulations in \cref{fig:annulus_drift}.  In this case there is a
drift of the angle at a linear rate~$\beta A(t)$, since the confinement of the
particle causes it to be strongly affected by the point vortex.  The no-drift
case~$\beta=0$ is consistent with the Gaussian prediction of Comtet et
al.~\cite{Comtet1993b} and the recent result of Geng \&
Iyer~\cite{Geng2018_preprint}.

\begin{figure}
  \centering
  \includegraphics[scale = .75]{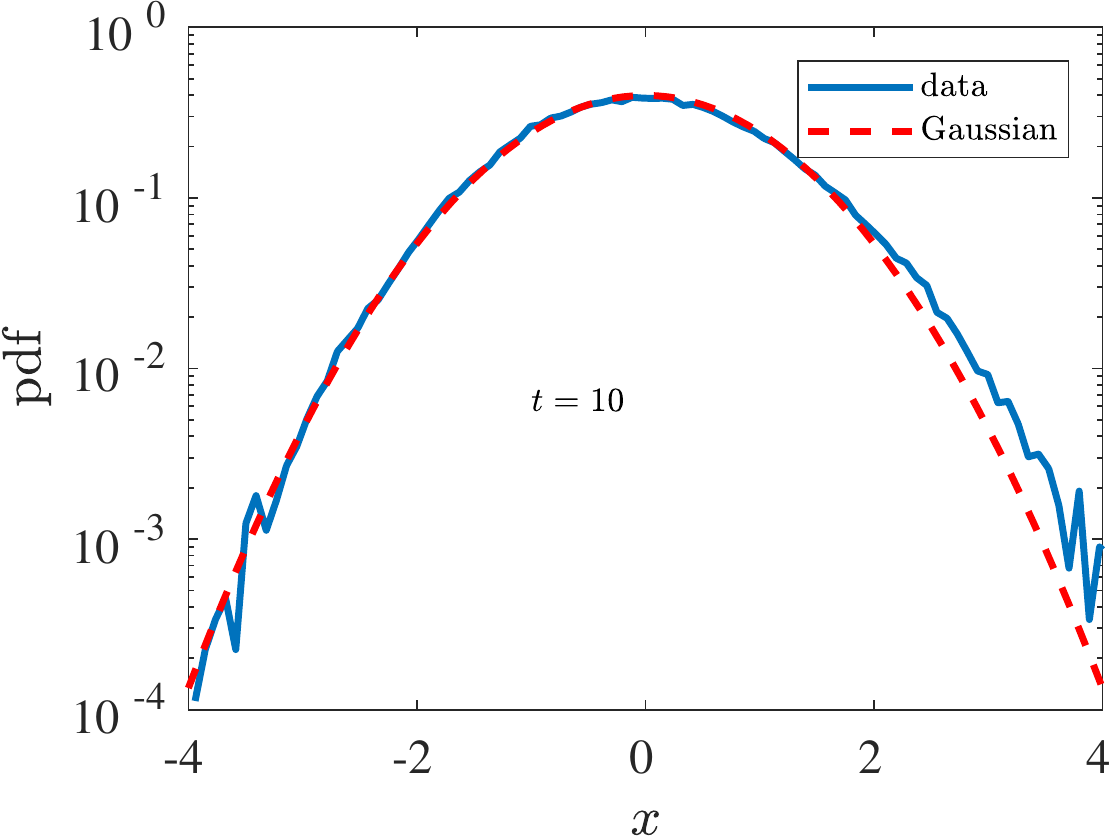}
  \caption{Numerical simulation of $10^5$ realizations of winding in an
    annulus with inner radius~$a=1/2$ and outer radius~$b=2$, with vortex
    strength~$\beta=1$.  Compared to the unbounded cases, the convergence
    in~$t$ is very rapid.  Note that there is a slight asymmetry biased in the
    direction of the drift, which is not captured at this order of the
    expansion.}
  \label{fig:annulus_drift}
\end{figure}

\appendix

\section{Small-$\kd_\mu$ asymptotics of integral}
\label{apx:smallk}

\mathnotation{\cc}{c}

We justify that the $\mu$ integral in~\cref{eq:inner_disk_solution} is
dominated by small $\kd_\mu$ when $t\rightarrow \infty$, or equivalently when
$\lambda \rightarrow 0$.  By symmetry, we need only consider the case where
$\mu>0$.  For some fixed small $\delta > 0$, we will show that as
$\lambda\rightarrow 0$,
\begin{subequations}
\begin{align}
  \I_{<\delta} &=
  \left\lvert\int_0^{\delta}
  \ee^{\imi\mu\theta}
  \,Z_{\kd_\mu}(\lambda r,\lambda a)\,Z_{\kd_\mu}(\lambda r_0,\lambda a)
  \dint \mu \right\rvert
  = \Order{\nofrac{1}{\log\lambda^{-1}}},
  \label{eq:Ideltasmall}
  \\
  \I_{>\delta} &=
  \left\lvert\int_{\delta}^\infty
  \ee^{\imi\mu\theta}
  \,Z_{\kd_\mu}(\lambda r,\lambda a)\,Z_{\kd_\mu}(\lambda r_0,\lambda a)
  \dint \mu \right\rvert
  = \Order{\nofrac{\lambda^{\sqrt{2}\delta}}{\log\lambda^{-1}}}.
  \label{eq:Ideltabig}
\end{align}
\end{subequations}
Hence, the ratio~$\I_{<\delta}/\I_{>\delta} \rightarrow \infty$
as~$\lambda\rightarrow0$ and the integral is dominated by small
$\mu < \delta$.  In other words, we show that for any fixed small $\delta>0$,
there is a $\lambda$ small enough to ensure that the $\mu$-integral is
dominated by $\mu < \delta$.

For small $\lambda$ we can use the expansion~\cref{eq:Zsmalllambda}
for~$Z_{\kd_\mu}(\lambda r_0, \lambda a)$.  Since
$\kd_\mu = \sqrt{\mu^2+\imi\beta\mu}$ with~$\beta > 0$ and~$r_0 > a$, we have,
as $\mu \rightarrow \infty$,
\begin{equation}
  \kd_\mu\sim \mu + \imi\beta/2,
  \qquad
  \Gamma(1+\kd_\mu) \sim \sqrt{2\pi\kd_\mu}\,(\kd_\mu)^{\kd_\mu}\ee^{-\kd_\mu},
  \qquad
  \bigl\lvert(r_0/a)^{\kd_\mu}\bigr\rvert \rightarrow \infty,
\end{equation}
the second of which is Stirling's approximation.  Hence, one of the terms in
the denominator of~\cref{eq:Zsmalllambda} is
\begin{equation}
  (\lambda a/2)^{-\kd_\mu}\Gamma(1+\kd_\mu)
  \sim
  \sqrt{2\pi\kd_\mu}\,(2\kd_\mu/\lambda a\ee)^{\kd_\mu}
  \label{eq:lambda-a2-kd_m}
\end{equation}
whose magnitude goes to~$\infty$ rapidly with increasing~$\mu$.  Euler's
reflection formula $\Gamma(z)\Gamma(1-z) = \pi/\sin(\pi z)$ allows us to show
\begin{equation}
  \lvert(\lambda a/2)^{\kd_\mu}\Gamma(1-\kd_\mu)\rvert
  \lesssim
  \frac{\pi}{\sinh(\pi\beta/2)}\,
  \l\lvert
  \frac{(\lambda a/2)^{\kd_\mu}}{\Gamma(\kd_\mu)}\r\rvert
  \rightarrow 0.
\end{equation}
The final term in the denominator of~\cref{eq:Zsmalllambda} is
\begin{equation}
  \lvert 2\pi \kd_\mu \tan(\pi \kd_\mu/2)\rvert
  \lesssim
  2\pi \lvert \kd_\mu\rvert \coth(\pi\beta/4)
\end{equation}
which does not go to zero as~$\mu \rightarrow \infty$, but can be neglected
compared to the square of~\cref{eq:lambda-a2-kd_m}.  Moreover, it can also be
neglected compared to the next-order asymptotic improvement of Stirling's
formula; since it is the only negative term, we will obtain an asymptotic
lower bound.  Referring back to~\cref{eq:Zsmalllambda}, there exists $N$ large
enough such that whenever $\mu>N>\delta$, we have
$ \bigl\lvert(r_0/a)^{\kd_\mu} + (r_0/a)^{-\kd_\mu}\bigr\rvert \lesssim
2\bigl\lvert(r_0/a)^{\kd_\mu} \bigr\rvert $ for~$r_0 > a$, and
\begin{multline*}
  \sqrt{[(\lambda a/2)^{\kd_\mu}\Gamma(1-\kd_\mu)
      + (\lambda a/2)^{-\kd_\mu}\Gamma(1+\kd_\mu)]^2
      - 2\pi \kd_\mu \tan(\pi \kd_\mu/2)} \\
    \geq
    \bigl\lvert
    \sqrt{2\pi}\,(\kd_\mu)^{\nofrac{1}{2}}(2\kd_\mu/\lambda a\ee)^{\kd_\mu}
    \bigr\rvert.
\end{multline*}
Putting these together, we have the asymptotic bound
\begin{equation}
  \lvert Z_{\kd_\mu}(\lambda r_0,\lambda a)\rvert
  \leq
  \frac{2}{\sqrt{2\pi\lvert\kd_\mu\rvert}}
  \bigl\lvert (\lambda r_0\ee/2\kd_\mu)^{\kd_\mu} \bigr\rvert
\end{equation}
for~$\mu > N$.

We thus have the estimate
\begin{align*}
  \int_{N}^\infty
  \left\lvert Z_{\kd_\mu}(\lambda r,\lambda a)\,
    Z_{\kd_\mu}(\lambda r_0,\lambda a)\right\rvert \dint\mu
  &\leq
  \frac{2}{\pi}
  \int_{N}^\infty
  \left\lvert\frac{\lambda^2rr_0\ee^2}{4\kd_\mu^2}
  \right\rvert^{\Re(\kd_\mu)}\!\frac{1}{\lvert\kd_\mu\rvert} \dint\mu\\
  &\leq
  C_1(\beta) \int_{N}^\infty
  \left\lvert\frac{\lambda^2rr_0\ee^2}{4\mu^2}
  \right\rvert^\mu\,\frac{\!\dint\mu}{\mu} \\
  &\leq
  C_1(\beta) \int_{\delta}^\infty
  \left\lvert\frac{\lambda^2rr_0\ee^2}{4\delta^2}
  \right\rvert^\mu\,\frac{\!\dint\mu}{\mu}\\
  &\leq
  C_2(\delta,\beta,r,r_0) \frac{\lambda^{2\delta}}{\log\lambda^{-1}},
\end{align*}
where in the last step we used the asymptotic form for the incomplete Gamma
function.  The real part of $\kd_\mu$ is bounded from below for
all~$\mu \ge 0$:
\begin{equation}
  \Re(\kd_\mu)
  =
  (\mu^2(\mu^2 + \beta^2))^{1/2}\,\cos\l(\tfrac12\arg(\mu + \imi\beta)\r)
  \geq
  \mu/\sqrt{2}.
\end{equation}
On the interval~$\mu \in [\delta,N]$, since~$\mu$ is bounded the term
$(\lambda a /2)^{-\kd_\mu}$ dominates for small~$\lambda$ in the denominator
of \cref{eq:Zsmalllambda}; therefore we can bound the
integral~\cref{eq:Ideltabig} as
\begin{align*}
  \I_{>\delta}
  &\leq
  \int_{\delta}^\infty
  \left\lvert Z_{\kd_\mu}(\lambda r,\lambda a)\,
    Z_{\kd_\mu}(\lambda r_0,\lambda a)\right\rvert \dint\mu\\
  &\leq
  C_3(\delta,\beta,r,r_0)\int_{\delta}^N
  \lambda^{2\Re(\kd_\mu)} \dint\mu
  +
  \int_{N}^\infty
  \left\lvert Z_{\kd_\mu}(\lambda r,\lambda a)\,
    Z_{\kd_\mu}(\lambda r_0,\lambda a)\right\rvert \dint\mu\\
  &\leq
  C_3(\delta,\beta,r,r_0)\int_{\delta}^N
  \lambda^{\sqrt{2}\mu}\dint\mu
  +
  \int_{N}^\infty
  \left\lvert\frac{\lambda^2rr_0\ee^2}{4\kd_\mu^2}
  \right\rvert^{\Re(\kd_\mu)}\!\frac{1}{\lvert\kd_\mu\rvert} \dint\mu\\
  &\leq
  C_4(\delta,\beta,r,r_0)\frac{\lambda^{\sqrt{2}\delta}}{\log\lambda^{-1}}
  +
  C_2(\delta,\beta,r,r_0) \frac{\lambda^{2\delta}}{\log\lambda^{-1}}\\
  &=
  \Order{\nofrac{\lambda^{\sqrt{2}\delta}}{\log\lambda^{-1}}}.
\end{align*}

Finally we show~\eqref{eq:Ideltasmall}.  By choosing~$\delta$ small enough and
noting that $a$, $r$, $r_0$, $\theta$ are all fixed, a small-$\mu$
approximation can be applied to go from~\cref{eq:Zsmalllambda}
to~\cref{eq:Zsmalllambdasmallmu}, and then
\begin{equation}
  Z_{\kd_\mu}(\lambda r_0,\lambda a)
  \sim
  \frac{\cosh(\kd_\mu\log(r_0/a))}{\cosh(\kd_\mu\log(\lambda \at/2))}
  \sim
  \sech(\sqrt{\beta\mu/2}\,(1+\imi)\log(\lambda \at/2)).
  \label{eq:Zdelta}
\end{equation}
Therefore, denoting $\cc=\sqrt{\beta/2}\log(2/\lambda\at)$, we have
\begin{align}
  \I_{<\delta}
  &\sim
  \left\lvert\int_0^{\delta}
  \sech^2(\sqrt{\beta\mu/2}\,(1+\imi)\log(\lambda \at/2))
  \dint \mu \right\rvert \nonumber\\
  &=
  \frac{ (1-\imi)\cc\sqrt{\delta} \tanh((1-\imi)\cc\sqrt{\delta})
  +
  \imi \log(\cosh((1-\imi)\cc\sqrt{\delta}))}{\cc^2}\nonumber\\
  &\sim
  \frac{\cc\sqrt{\delta} - \pi/2
    + \imi \log(\sqrt{2}/2\sin(\cc\sqrt{\delta}+\pi/4)}{\cc^2}\nonumber\\
  &=
  \Order{\nofrac{1}{\log\lambda^{-1}}}.
\end{align}

\pagebreak[1]

\section{Small-$\kd_\mu$ asymptotics of $\I_{\kd_\mu}(t)$}
\label{apx:Ikasym}

In this \namecref{apx:Ikasym} we derive an asymptotic form
for~$\I_{\kd_\mu}(t)$ defined by~\cref{eq:lambda_and_r_integral0}, valid for
small $\kd_\mu$ and~$t \rightarrow \infty$.  We drop the~$\mu$ subscript
on~$\kd_\mu$ to lighten the notation, since we only need to know that~$\kd$ is
small.

Expanding~$Z_\kd(\lambda r,\lambda a)$ in~\cref{eq:lambda_and_r_integral0}
according to~\cref{eq:Zkasym}, we have
\begin{multline}
  \I_\kd(t)
  \sim
  \int_a^\infty \int_0^\infty \ee^{-\lambda^2 t}
  \left(J_\kd(\lambda r) - Y_\kd(\lambda r)\,
    \frac{(\lambda \at/2)^{\kd}\sin{\pi \kd}}
    {(\lambda \at/2)^{\kd}+(\lambda \at/2)^{-\kd}} \right) \\
  \times \sech(\kd\log(\lambda \at/2))\,
  \lambda r\dint \lambda \dint r.
  \label{eq:lambda_and_r_integral}
\end{multline}
Now make the change of variable
\begin{equation}
  \lambda = y/L, \qquad r = Lz/y,
  \quad\text{where}\quad L(t) = \sqrt{t},
\end{equation}
in~$\I_\kd(t)$.  We can expand $\sech(\kd\log(\at y/2L))$ when $(\at y/2L)<1$,
since $\Re(\kd)>0$:
\begin{equation}
  \sech(\kd\log\tfrac{\at y}{2L})
  =
  2\sum_{n=0}^\infty (-1)^n \left(\frac{\at y}{2L}\right)^{(2n+1)\kd}\,.
\end{equation}
The condition $(\at y/2L) < 1$ is satisfied since
\cref{eq:lambda_and_r_integral} is effectively truncated at finite~$y$, due to
the factor $\ee^{-y^2}$, so essentially by dominated convergence
\begin{multline*}
  \I_\kd(t)
  =
  2\sum_{n=0}^\infty (-1)^n \left(\frac{\at}{2L}\right)^{(2n+1)\kd}
  \int_0^\infty \int_{ay/L}^\infty \ee^{-y^2} y^{(2n+1)\kd-1}
  J_\kd(z)\,z \dint z \dint y \\
  - 2\sin{\pi \kd}
  \sum_{n=0}^\infty (-1)^n(n+1)
  \left(\frac{\at}{2L}\right)^{(2n+3)\kd}
  \int_0^\infty \int_{ay/L}^\infty \ee^{-y^2} y^{(2n+3)\kd-1}
  Y_\kd(z)\,z \dint z \dint y.
\end{multline*}
It can be shown that as~$L \rightarrow \infty$
\begin{equation}
  \int_0^\infty \int_{ay/L}^\infty
  \ee^{-y^2}y^{(2n+1)\kd-1}J_\kd(z)\,z \dint z\dint y
  \longrightarrow
  \tfrac{\kd}{2}\,\Gamma\l(\tfrac{2n+1}{2}\kd\r)
\end{equation}
and
\begin{equation}
  \int_0^\infty \int_{ay/L}^\infty \ee^{-y^2}y^{(2n+1)\kd-1}
  Y_\kd(z)\,z \dint z\dint y
  \longrightarrow
  \tfrac{\kd}{2}\,\Gamma\l(\tfrac{2n+1}{2}\kd\r)\cot \tfrac{\pi \kd}{2}
\end{equation}
for $0<\Re(\kd)<2$, which is satisfied since $\kd$ is small.  Hence,
\begin{multline}
  \I_\kd(t)
  =
  2\sum_{n=0}^\infty (-1)^n
  \left(\frac{\at}{2L}\right)^{(2n+1)\kd}
  \tfrac{\kd}{2}\,\Gamma\l(\tfrac{2n+1}{2}\kd\r) \\
  - 2\sin(\pi \kd)\cot(\pi \kd/2) \sum_{n=0}^\infty
  (-1)^n(n+1) \left(\frac{\at}{2L}\right)^{(2n+3)\kd}
  \tfrac{\kd}{2}\,\Gamma\l(\tfrac{2n+3}{2}\kd\r).
\end{multline}
Since for small $\kd$,
\begin{equation}
  \frac{2n+1}{2}\,\kd\,\Gamma\l(\frac{2n+1}{2}\kd\r) \sim 1,
  \qquad
  \sin(\pi \kd)\cot(\pi \kd/2)\sim 2,
\end{equation}
we have,
\begin{equation}
  \I_\kd(t)
  \sim
  2\sum_{n=0}^\infty (-1)^n
  \tfrac{1}{2n+1}
  \left(\frac{\at}{2L}\right)^{(2n+1)\kd}
  - 4\sum_{n=0}^\infty
  (-1)^n \tfrac{n+1}{2n+3} \left(\frac{\at}{2L}\right)^{(2n+3)\kd}.
\end{equation}
Reindexing the second sum then allows us to combine the sums to obtain
\begin{align}
  \I_\kd(t)
  &\sim
  2\sum_{n=0}^\infty (-1)^n
  \left(\frac{\at}{2L}\right)^{(2n+1)\kd}\nonumber\\
  &= \sech(\kd\log(\at/2L)).
\end{align}
which is \cref{eq:Ikasym}.

\section{Free Brownian motion revisited}
\label{apx:revisit}

In this \namecref{apx:revisit} we use our results to slightly improve the
normalizer used by Grosberg \& Frisch~\cite{Grosberg2003} for the free
Brownian motion cases~$\beta=0$ (i.e., without tangential drift), which makes
the normalizer asymptotically correct to~$\Order{1}$ for large~$t$.  Numerical
simulations confirm a modest improvement.

For the case of winding around a point, we already derived
in~\cref{eq:GFimprovement} the correction to Spitzer and Grosberg \& Frisch's
result,
\begin{equation}
  \frac{2\Theta(t)}{\log(4t/r_0^2\ee^\gamma)}
  \convdist
  \X,
  \qquad
  \p_\X(\x) = \frac{1}{\pi} \frac{1}{1+\x^2}\,.
  \label{eq:GFimprovement2}
\end{equation}
The only difference is the~$\ee^\gamma$ factor in the normalizer.  Similarly,
the calculation in \cref{sec:inner_disk} for~$\beta=0$ can be shown to give
\begin{equation}
  \frac{2\Theta(t)}{\log(4t/a^2\,\ee^\gamma)}
  \convdist
  \X,
  \qquad
  \p_\X(\x) = \tfrac{1}{2} \sech (\pi \x/2).
  \label{eq:GFsechimprovement}
\end{equation}
The reason for the missing factor in the normalizer of Grosberg \& Frisch can
be traced to their use of the asymptotic form~\cref{eq:Jsmallarg} for
the~$J_{\kd_\mu}(\lambda r)$ in~\cref{eq:Ppoint}.  This renders the~$r$
integral divergent at infinity.  To remedy this, the authors truncate the
integral at~$r \sim \sqrt{4t}$, which is reasonable but in doing so they lose
a constant factor of~$\ee^\gamma$ in the normalizer.  Our approach avoids this
and is asymptotically valid.

\begin{figure}
  \centering
  \includegraphics[scale = .45]{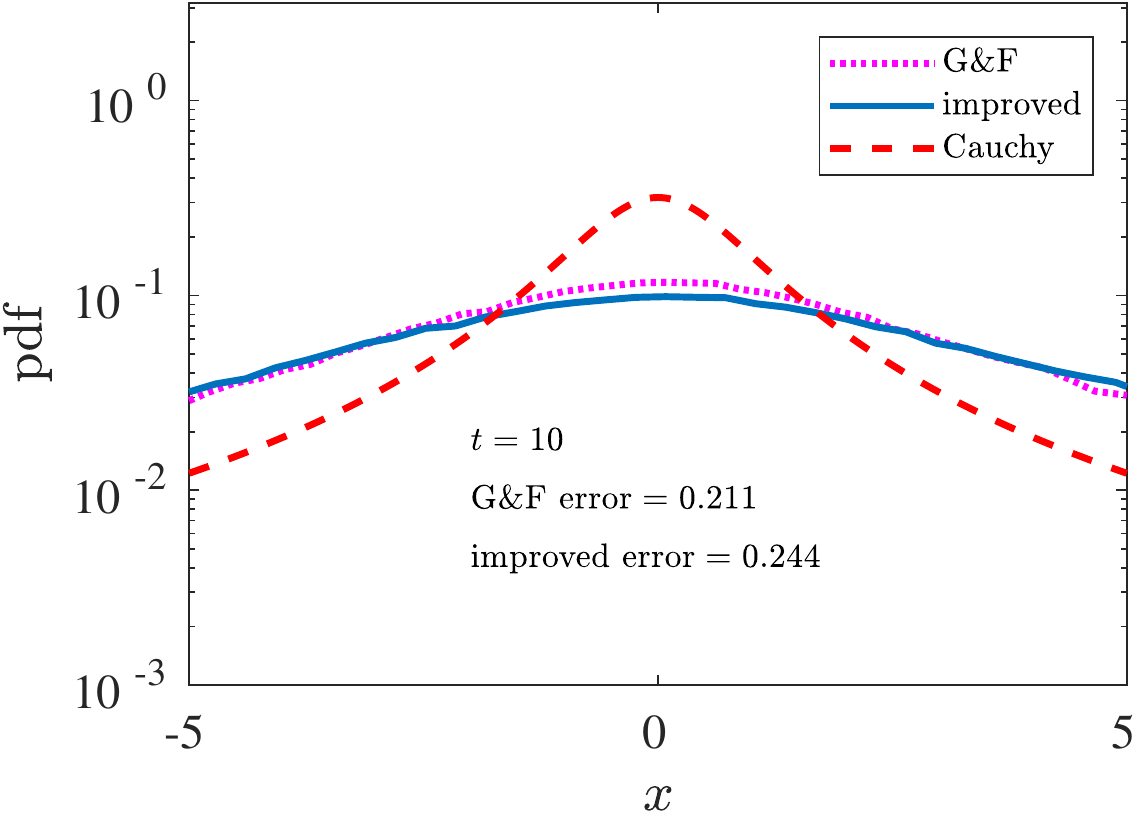}
  \hspace{.025\textwidth}
  \includegraphics[scale = .45]{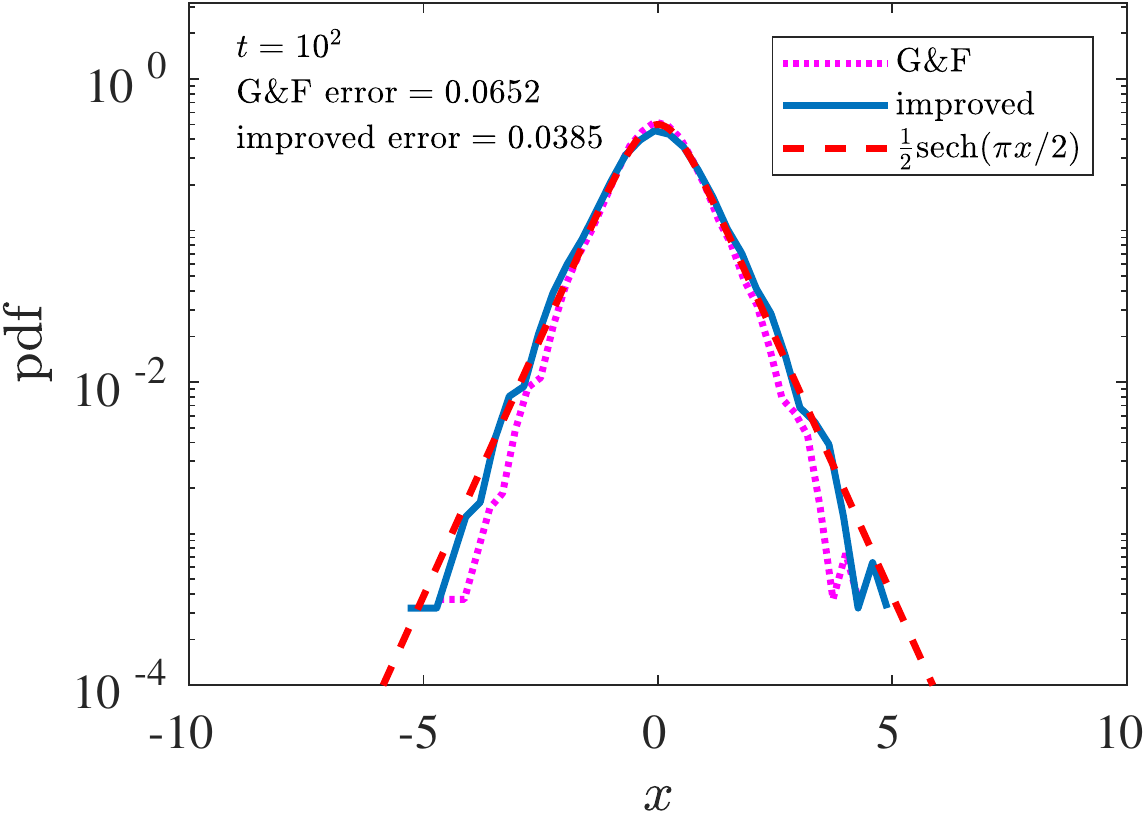}

  \includegraphics[scale = .45]{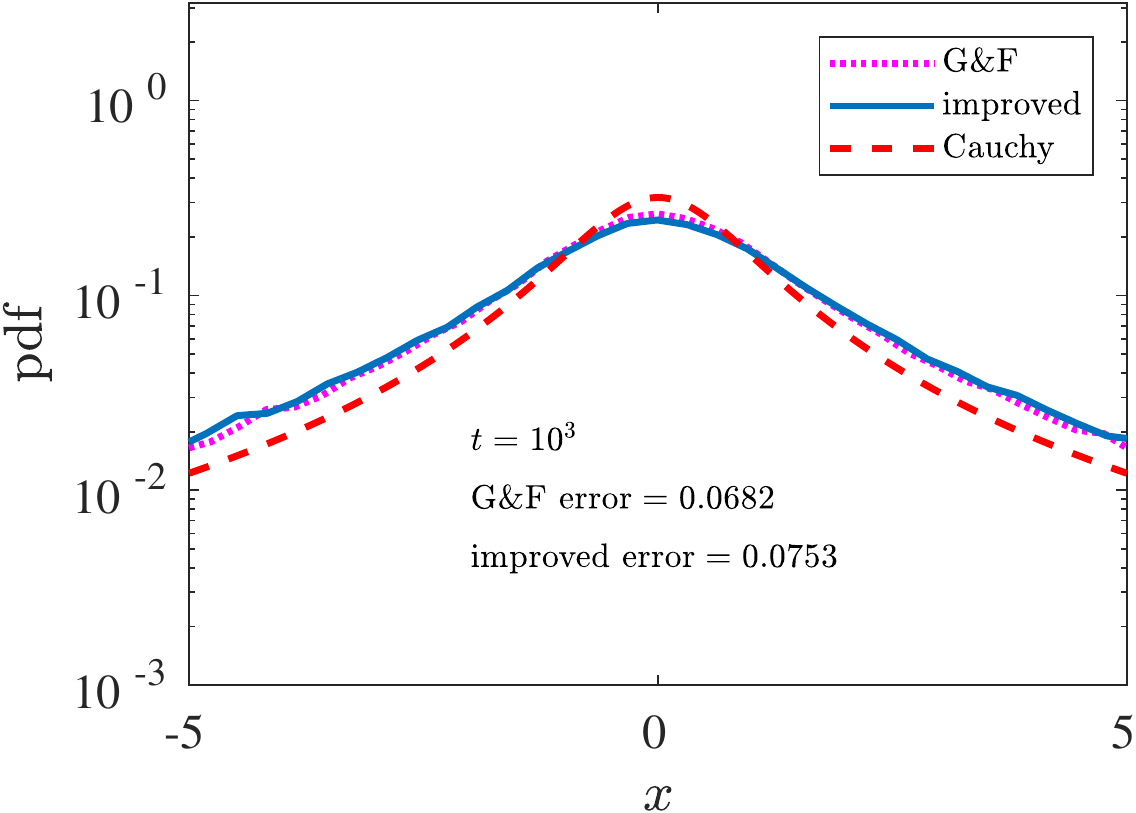}
  \hspace{.025\textwidth}
  \includegraphics[scale = .45]{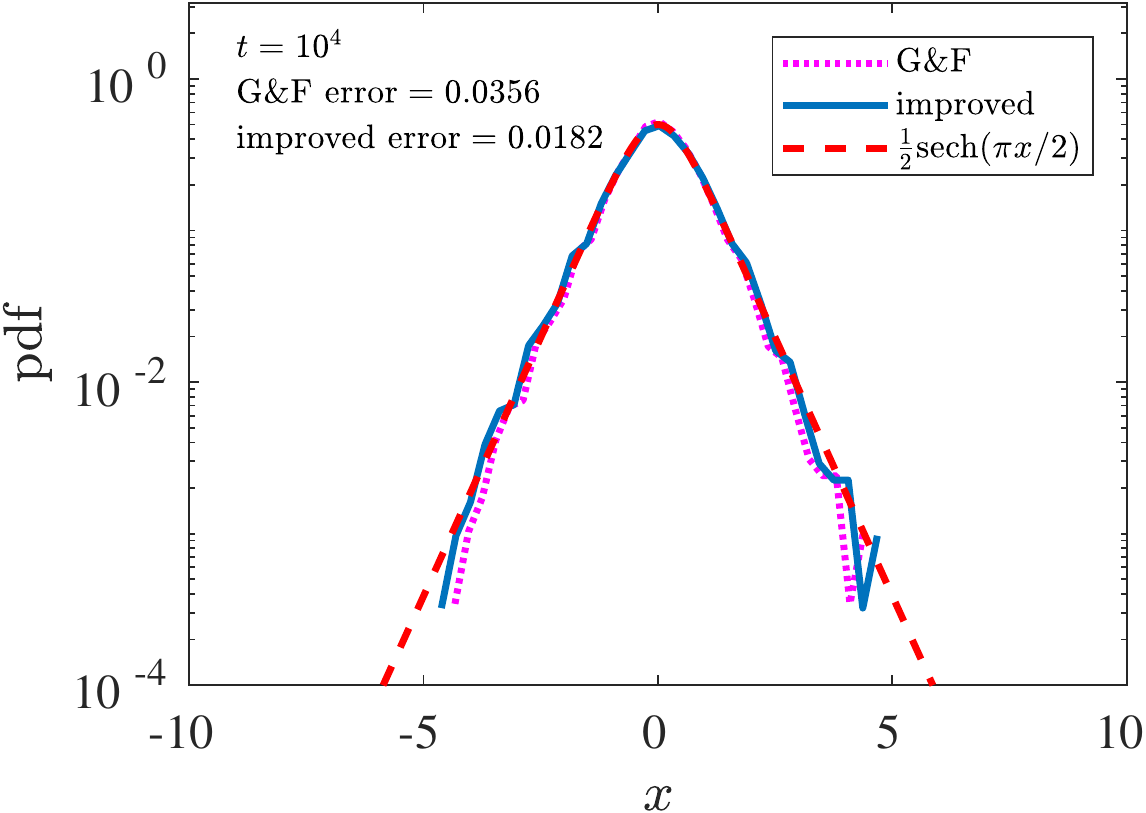}

  \includegraphics[scale = .45]{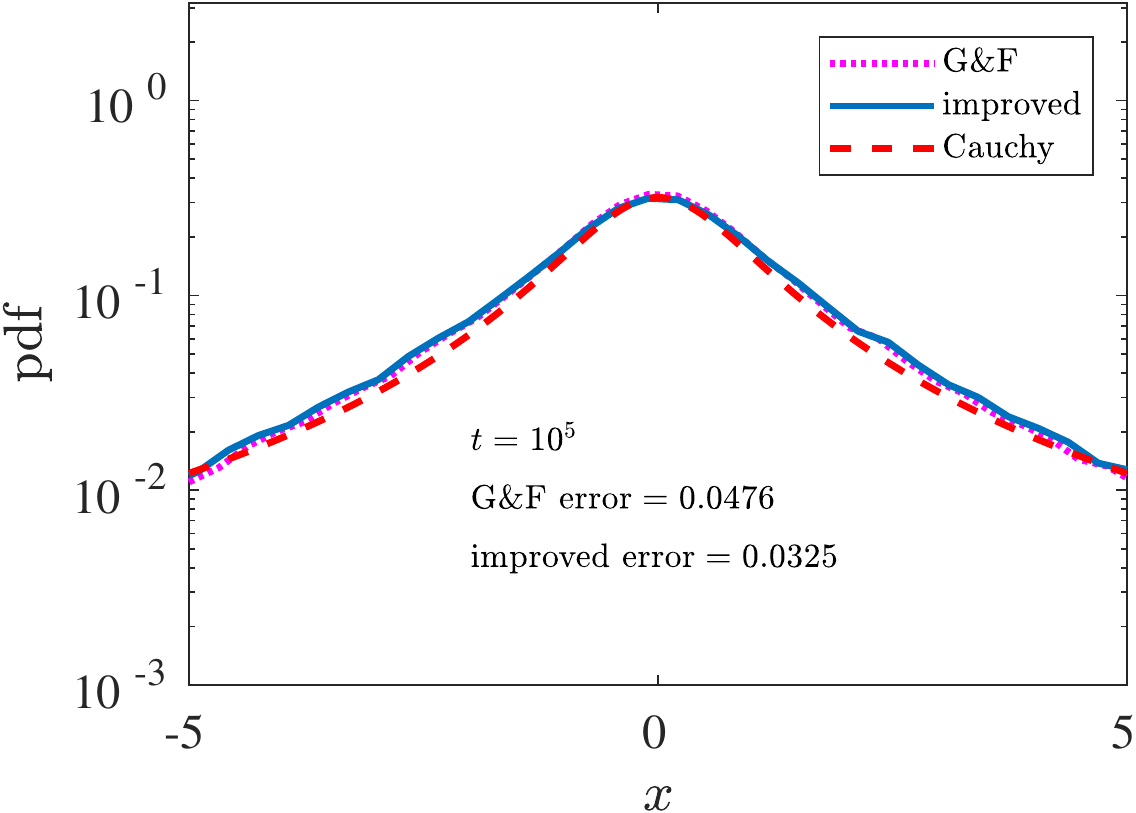}
  \hspace{.025\textwidth}
  \includegraphics[scale = .45]{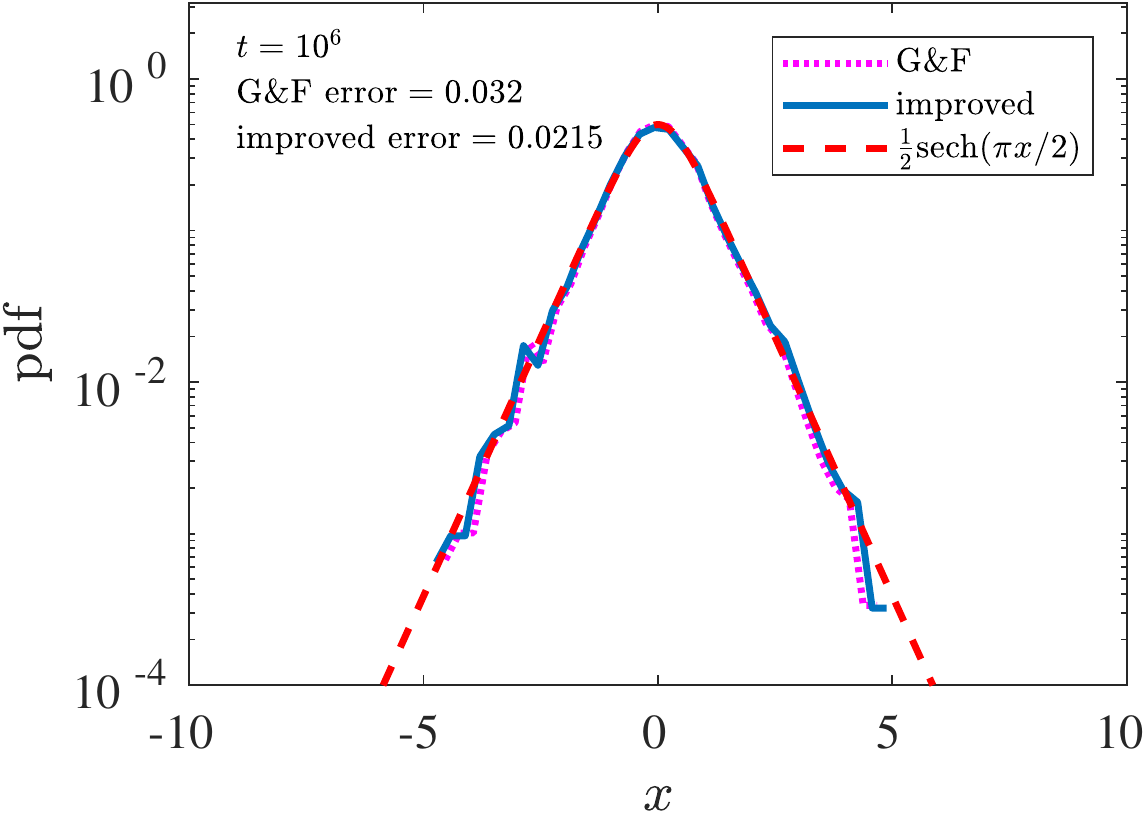}
  \caption{Numerical simulations without drift ($\beta=0$).  The left column
    shows~$10^5$ realizations for a point-like obstacle with $r_0=1$.  The
    right column is for~$10^4$ realizations of winding around a disk of radius
    $a=2$.  The error is the~$\Ltwo$ norm of the difference between the data
    and the limit distribution in each case.}
  \label{fig:correction}
\end{figure}

We compare the corrected
forms~\cref{eq:GFimprovement2,eq:GFsechimprovement} to a normalizer
without the factor~$\ee^\gamma$ for numerical simulations in
\cref{fig:correction}.  The improvement is marginal and can only be seen for
large times.  Nevertheless, we include it here for completeness and rigor.

\section*{Acknowledgments}

This research was supported by the US National Science Foundation, under grant
CMMI-1233935.

\bibliography{../winding.bib}

\end{document}